\documentclass[a4paper]{amsart}

\usepackage[backrefs,lite]{amsrefs}
\usepackage{mathtools}   
\usepackage{amssymb}              
\usepackage{lmodern,bm}              
\usepackage{longtable}
\usepackage{tikz}
\usepackage[all]{xy}
\usepackage{dsfont}
\usepackage{pdflscape}
\usepackage{bigdelim}
\usepackage{multirow}
\usepackage{cancel}
\usepackage{array}
\usepackage[hidelinks]{hyperref}
\usepackage{stmaryrd}
\usetikzlibrary{cd}

\CompileMatrices

\setcounter{tocdepth}{1}

\newtheorem{theorem}{Theorem}[section]
\newtheorem{introthm}{Theorem}
\newtheorem{lemma}[theorem]{Lemma}
\newtheorem{proposition}[theorem]{Proposition}
\newtheorem{corollary}[theorem]{Corollary}

\theoremstyle{definition}

\newtheorem{definition}[theorem]{Definition}

\numberwithin{equation}{theorem}


\def\vector2#1#2{\left(\begin{array}{c} #1 \\ #2 \end{array}\right)}

\def\Cl{{\rm Cl}}

\def\CC{{\mathbb C}}

\def\ZZ{{\mathbb Z}}
\def\RR{{\mathbb R}}

\def\QQ{{\mathbb Q}}
\def\PP{{\mathbb P}}

\def\conv{{\rm conv}}

\def\bangle#1{{\langle #1 \rangle}}

\def\rk{{\rm rk}}

\def\Hom{{\rm Hom}}

\def\lcm{{\rm lcm}}

\def\vol{{\rm vol}}
\def\Vol{{\rm Vol}}
\def\red{{\rm red}}

\def\syl{{\rm syl}}

\makeatletter
\newcommand*{\defeq}{\mathrel{\rlap{%
                     \raisebox{0.3ex}{$\m@th\cdot$}}%
                     \raisebox{-0.3ex}{$\m@th\cdot$}}%
                     =}
\makeatother

\title[Sharp degree bounds for fake weighted projective spaces]{Sharp degree bounds for fake weighted projective spaces}

\author[Andreas Bäuerle]{Andreas Bäuerle}

\address{Mathematisches Institut, Universität Tübingen,
Auf der Morgenstelle 10, 72076 Tübingen, Germany}
\email{baeuerle@math.uni-tuebingen.de}

\subjclass[2020]{14M25, 52B20}

\sloppy

\begin{document}

\begin{abstract}
We give sharp upper bounds on the anticanonical degree of fake weighted projective spaces, only depending on the dimension and the Gorenstein index.
\end{abstract}

\maketitle

\section{Introduction}

A $d$-dimensional \emph{fake weighted projective space}
is a quotient $X = (\CC^{d+1} \backslash \{0\}) / G$ by a
diagonal action of $G := \CC^*\!\times\!\Gamma$, 
where $\Gamma$ is a finite abelian group and the
factor~$\CC^*$ acts via positive weights.
Any fake weighted projective space~$X$ is
normal,~$\QQ$-factorial, of Picard number one
and is a Fano variety, i.e. its anticanonical
divisor~$-\mathcal{K}$ is ample.
Apart from the classical projective spaces, all
fake weighted projective spaces are singular,
but have at most abelian quotient singularities.

Fake weighted projective spaces form an interesting
example class for the general question of effectively
bounding geometric data of a Fano variety in terms
of its singularities.
For instance, Kasprzyk~\cite{Ka} bounds the order
of the torsion part of the divisor class group 
of a fake weighted projective space $X$ provided
that it has at most canonical singularities.
Another invariant of the singularities is the
\emph{Gorenstein index}, i.e., the minimal
positive integer $\iota$ such that 
$\iota \mathcal{K}$ is Cartier.
In the case of Gorenstein index $\iota = 1$,
Nill~\cite{Ni} provides a bound for
the \emph{degree} of a $d$-dimensional fake weighted
projective space $X$, i.e., the self intersection
number $(-\mathcal{K})^d$ of its anticanonical
divisor.

In the present paper, we extend Nill's bound to higher
Gorenstein indices.
For any $d \ge 2$ define a $d+1$ tuple of positive integers
by 
$$
Q_{\iota,d}
 := 
\left(
\frac{2 t_{\iota,d}}{s_{\iota,1}},
\ldots ,
\frac{2 t_{\iota,d}}{s_{\iota,d-1}}, 1, 1
\right),
\ \
s_{\iota,k} := \iota\, s_{\iota,1} \cdots s_{\iota,k-1}+1,
\ \
t_{\iota,k} := \iota\, s_{\iota,1} \cdots s_{\iota,k\!-\!1},
$$
where $s_{\iota,1} := \iota + 1$. Our main result
provides sharp upper bounds on the degree $(-\mathcal{K})^d$ in terms
of the Gorenstein index and lists the cases attaining these bounds:

\begin{introthm}\label{thm:main}
The anticanonical degree of any $d$-dimensional fake weighted projective space $X$ of Gorenstein index $\iota$ is bounded according to the following table.
\begin{longtable}{c|c|cc|cc|c}
    $d$
    &
    $1$
    &
    $2$
    &
    $2$
    &
    $3$
    &
    $3$
    &
    $\ge 4$
    \\[2pt]\hline
    &&&&&\\[-9pt]
    $\iota$
    &
    $\ge 1$
    &
    $1$
    &
    $\ge\!2$
    &
    $1$
    &
    $\ge\!2$
    &
    $\ge 1$
    \\[2pt]\hline
    &&&&&\\[-10pt]
    $\begin{array}{c}
         \text{bound on}\\
         (-\mathcal{K})^d
    \end{array}$
    &
    $2$
    &
    $9$
    &
    $\frac{2(\iota+1)^2}{\iota}$
    &
    $72$
    &
    $\frac{2\,t_{\iota,3}^2}{\iota^4}$
    &
    $\frac{2\,t_{\iota,d}^2}{\iota^{d+1}}$
    \\[8pt]\hline
    &&&&&\\[-10pt]
    $\begin{array}{c}
         \text{attained}\\
         \text{exactly by}
    \end{array}$
    &
    $\PP^1$
    &
    $\PP^2$
    &
    $\PP(2\iota,1,1)$
    &
    $\begin{array}{l} \PP(3,1,1,1), \\ \PP(6,4,1,1) \end{array}$
    &
    $\PP(Q_{\iota,3})$
    &
    $\PP(Q_{\iota,d})$
\end{longtable}
\noindent Equality on the degree holds if and only if $X$ is isomorphic to one of the weighted projective spaces in the last row of the table.
\end{introthm}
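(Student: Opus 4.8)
The plan is to reduce the statement to a purely arithmetic optimization over weight systems and then to solve that by a greedy, Sylvester-type argument governed by the recursion defining $s_{\iota,k}$ and $t_{\iota,k}$. I encode $X$ by its weight system $(w_0,\dots,w_d)$, normalized so that $\gcd(w_0,\dots,w_d)=1$, together with the finite abelian group $\Gamma = N/\langle v_0,\dots,v_d\rangle$, where the $v_i$ are the primitive ray generators and $\sum_i w_i v_i = 0$. With $w := w_0 + \cdots + w_d$, the anticanonical degree is
\[
(-\mathcal{K})^d \;=\; \frac{w^{\,d}}{\abs{\Gamma}\,w_0\cdots w_d}.
\]
The first step is to reduce to honest weighted projective spaces, i.e. to $\Gamma=\{0\}$. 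Passing from $X$ to $\PP(w_0,\dots,w_d)$ coarsens the lattice $N$ to $N':=\langle v_0,\dots,v_d\rangle$; the inclusion $N'\subseteq N$ dualizes to $M\subseteq M'$, and since $\iota_j m_{\sigma_j}\in M$ implies $\iota_j m_{\sigma_j}\in M'$, each local Gorenstein index at a torus fixed point can only drop, whence $\iota(\PP(w))\mid\iota(X)$. Meanwhile the degree is multiplied by $\abs{\Gamma}\ge 1$. Thus it suffices to bound the degree of honest well-formed $\PP(w_0,\dots,w_d)$ whose Gorenstein index \emph{divides} $\iota$, and equality will force $\abs{\Gamma}=1$.

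The second step turns the Gorenstein index into arithmetic. For well-formed $\PP(w_0,\dots,w_d)$ the local Gorenstein index at the fixed point opposite the $j$-th ray is $w_j/\gcd(w_j,w)$, so the global index is $\lcm_j\, w_j/\gcd(w_j,w)$, and the hypothesis reads
\[
\frac{w_j}{\gcd(w_j,w)}\ \Big|\ \iota \qquad\text{for all }j.
\]
The entire theorem thereby becomes the extremal problem: maximize $w^{\,d}/(w_0\cdots w_d)$ over positive integers $w_0,\dots,w_d$ with $\gcd=1$ subject to these $d+1$ divisibility conditions. Writing $a_j:=w/w_j$, one has $\sum_j 1/a_j = 1$ and $(-\mathcal{K})^d = \bigl(\prod_j a_j\bigr)/w$, and the constraints say precisely that each $a_j$ is an integer up to a factor dividing $\iota$. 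For $\iota=1$ this is the classical weighted Egyptian-fraction problem, and the point is that the optimum is \emph{not} the greedy Sylvester tuple $(2,3,7,\dots)$ alone — that inflates $w=\lcm(a_j)$ — but rather the greedy choice $a_j = s_{\iota,j}/\iota$ for $j<d$ followed by \emph{two equal} top entries $a_{d-1}=a_d=w$, which keeps $w$ minimal and yields the tuple $Q_{\iota,d}$.

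The third step is the core induction on $d$, peeling off the largest weight (equivalently the smallest $s_{\iota,k}$) and invoking $t_{\iota,k+1}=t_{\iota,k}\,s_{\iota,k}$, $s_{\iota,k}=t_{\iota,k}+1$ to reproduce the recursion at each stage; the identity $\sum_{i=1}^{d-1} s_{\iota,i}^{-1} = \iota^{-1}-t_{\iota,d}^{-1}$ (telescoping via $s_{\iota,i}^{-1}=t_{\iota,i}^{-1}-t_{\iota,i+1}^{-1}$) gives $w=2t_{\iota,d}/\iota$ and the closed form $2t_{\iota,d}^2/\iota^{d+1}$. The base cases $d\le 3$, together with the small indices $\iota=1$ in dimensions $2$ and $3$, must be checked separately: there $\PP^2$, $\PP^3$ and $\PP(3,1,1,1)$ beat the generic tuple, which is exactly why the table singles these columns out. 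I expect the main obstacle to be precisely this core optimization: controlling the interaction between $\prod w_j$, the sum $w$, and the $\gcd$/$\lcm$ divisibility constraints so as to prove that the greedy configuration with two equal maximal denominators is globally optimal, and to show that it degenerates to the exceptional small cases in exactly the listed instances. Sharpness then follows from the degree computation for $\PP(Q_{\iota,d})$ above together with the verification that $\lcm_j\, w_j/\gcd(w_j,w)=\iota$ for this tuple, and uniqueness follows from the rigidity of the greedy bound, each strict inequality in the induction pinning down the weights up to the well-formedness normalization.
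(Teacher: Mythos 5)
Your reductions are sound and in fact coincide with the paper's: the passage from a fake weighted projective space to $\PP(w_0,\dots,w_d)$ with $|\Gamma|$ multiplying the degree and the Gorenstein index only dropping is Corollary \ref{cor:gidegwpsfwps}; the translation of the index condition into the divisibilities $w_j \mid \iota w$, the substitution $a_j = \iota w/w_j$, and your telescoping identity $1/s_{\iota,i} = 1/t_{\iota,i} - 1/t_{\iota,i+1}$ giving $w = 2t_{\iota,d}/\iota$ and the value $2t_{\iota,d}^2/\iota^{d+1}$ for $\PP(Q_{\iota,d})$ all match Sections \ref{section:gi-uf} and \ref{section:proof}. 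But the heart of the theorem is precisely the step you defer as ``the main obstacle'': the sharp inequality $a_1\cdots a_{n-1} \le 2\,t_{\iota,n-1}^2/\iota$ for uf-partitions of $\iota$, together with its complete equality classification (Proposition \ref{prop:uf_ineq}). Your proposed mechanism --- a greedy induction ``peeling off the largest weight'' --- does not go through as stated: after removing $a_1$ the remaining reciprocals sum to $1/\iota - 1/a_1 = (a_1-\iota)/(\iota a_1)$, which is not a unit fraction $1/\iota'$ for an integer $\iota'$, so an induction whose hypothesis is parametrized by the integer Gorenstein index has nothing to invoke at the next stage. Moreover, greedy optimality genuinely fails in exactly the exceptional configurations the theorem records --- $(6,6,6)$ at $(\iota,n)=(2,3)$, $(2,6,6,6)$ at $(1,4)$, and the excluded case $(1,3)$ --- so these cannot simply be ``checked separately'': any correct argument must produce them as the complete list of non-Sylvester optima, and your sketch contains no device that would do so.

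The paper's device is a relaxation to the compact real domain $A_\iota^n$ cut out by (A1)--(A3), where the crucial constraint (A3), $x_1\cdots x_k \le \iota(x_{k+1}+\dots+x_n)$, is how integrality enters (the numerator $a_1\cdots a_k - \iota\sum_j \prod_{i\ne j} a_i$ is a positive integer, hence $\ge 1$; Lemma \ref{lemma:ufp_in_A}). Without (A3) the relaxed minimum of $x_1\cdots x_{n-1}$ is $0$ and no bound exists, and your plan never articulates a substitute. The minimum over $A_\iota^n$ is then analyzed by perturbation arguments (Lemma \ref{lemma:yn-1=yn}: a minimizer has $y_{n-1}=y_n$ and leading coordinates $y_k = 1/s_{\iota,k}$), a three-way case split on the first index with $y_{i_0}=y_n$, and the auxiliary inequality $(r+1)^r\,t_{\iota,n-r+1}^{r+1} \le 2\,t_{\iota,n}^2$ (Lemma \ref{lemma:t_n-ineq}), following Izhboldin--Kurliandchik and Nill; this is a substantial self-contained argument, not a routine verification. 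Two further slips: $\PP^3$ is not extremal in dimension $3$ (its degree is $64 < 72$; the second attainer is $\PP(6,4,1,1) = \PP(Q_{1,3})$, arising from $\syl_{1,4} = (2,3,12,12)$); and since you only bound spaces whose index \emph{divides} $\iota$, you additionally need monotonicity of the bound in $\iota$, whereas the paper works with a uf-partition of the exact index and settles the $d=2$, $\iota \ge 2$ column by observing that the competing optimum $(6,6,6)$ is not reduced and hence corresponds to no weighted projective space --- a reducedness mechanism absent from your proposal.
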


The article is organized as follows. Section \ref{section:fpws-ws} provides basic properties of fake weighted projective spaces. In Section \ref{section:gi-uf} we assign to any $d$-dimensional fake weighted projective space of Gorenstein index $\iota$ a certain partition of $1/\iota$ into $d+1$ unit fractions and give a formula to compute the anticanonical degree in terms of the denominators of these unit fractions. Section \ref{section:syl-seq} contains the number theoretic part of the proof of Theorem \ref{thm:main}. In Section \ref{section:proof} we complete the proof of the main result. This amounts to constructing a weighted projective space of given dimension $d$ and Gorenstein index $\iota$ whose unit fraction partition of $1/\iota$ meets a maximality condition.

\section{Fake weighted projective spaces}\label{section:fpws-ws}

We recall basic properties of fake weighted projective spaces and fix our notation, see also \cite[Sect.~3]{Ni}. The reader is assumed to be familiar with the very basics of toric geometry \cite{CoLiSch, Fu}. Throughout the article $N$ is a rank $d$ lattice for some $d \in \ZZ_{\ge 2}$. Its dual lattice is denoted by $M = \Hom(N,\ZZ)$ with pairing $\bangle{\cdot\, , \cdot } \colon M \times N \rightarrow \ZZ$. We write $N_\RR := N \otimes_\ZZ \RR$ and $M_\RR := M \otimes_\ZZ \RR$. Polytopes $P \subseteq N_\RR$ are assumed to be full dimensional with $\mathbf{0} \in N_\RR$ in their interior. The \emph{normalized volume} of a $d$-dimensional polytope $P$ is $\Vol(P) = d! \vol(P)$, where $\vol(P)$ denotes its euclidean volume. The \emph{dual} of a polytope $P \subseteq N_\RR$ is the polytope
\begin{equation*}
    P^* \ := \ \{ u \in M_\RR ; \, \bangle{u,v} \ge -1 \text{ for all } v \in P\} \ \subseteq \ M_\RR.
\end{equation*}
For a facet $F$ of $P$ we denote by $u_F \in M_{\RR}$ the unique linear form with $\bangle{u_F,v} = -1$ for all $v \in F$. We have
\begin{equation*}
    P^* \ = \ \conv(\, u_F; \, F \text{ facet of } P\, ), \qquad P \ = \ \{v \in N_\RR ; \, \bangle{u_F,v} \ge -1,\, F \text{ facet of } P\}.
\end{equation*}
A \emph{lattice polytope} $P \subseteq N_\RR$ is a polytope whose vertices are lattice points in $N$. We regard two lattice polytopes $P \subseteq N_{\RR}$ and $P' \subseteq N'_{\RR}$ as isomorphic if there is a lattice isomorphism $\varphi \colon N \rightarrow N'$ mapping $P$ bijectively to $P'$.

\begin{proposition}
The fake weighted projective spaces are precisely the toric varieties $X = X(P)$ associated to the face fan of a lattice simplex $P \subseteq N_\RR$ with primitive vertices.
\end{proposition}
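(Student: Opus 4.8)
The plan is to establish a dictionary between the two descriptions of fake weighted projective spaces: on one hand as quotients $(\CC^{d+1}\setminus\{0\})/G$ with $G=\CC^*\times\Gamma$, and on the other as toric varieties $X(P)$ arising from the face fan of a lattice simplex $P\subseteq N_\RR$ with primitive vertices. The natural route is to pass through the combinatorial data of a complete simplicial fan of Picard number one, since toric varieties of this shape are exactly those whose rays are the $d+1$ vertices of a simplex and whose maximal cones are spanned by all proper subsets of those rays, i.e. the cones of the face fan of the simplex.

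First I would recall the construction of the toric variety attached to the face fan of a lattice simplex $P$. Let $v_0,\dots,v_d\in N$ be the primitive vertices of $P$. Because $\mathbf 0$ lies in the interior of $P$, there is a unique (up to scaling) relation $\sum_{i=0}^d w_i v_i = 0$ with all $w_i\in\ZZ_{>0}$; normalizing so that the $w_i$ are coprime gives a weight vector $(w_0,\dots,w_d)$. The face fan then has rays $\RR_{\ge 0}v_i$ and maximal cones $\cone(v_j;\,j\neq i)$, so the fan is complete and simplicial of Picard number one, exhibiting $X(P)$ as normal, $\QQ$-factorial, projective. The Cox construction realizes $X(P)$ as a GIT quotient of $\CC^{d+1}\setminus\{0\}$ by the group $G=\Hom(\Cl(X),\CC^*)$, which for Picard number one is precisely of the form $\CC^*\times\Gamma$ with $\Gamma$ the torsion of the class group: the $\CC^*$ acts with the positive weights $w_i$ and $\Gamma$ acts through the finite part. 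This gives the inclusion ``toric variety of a face fan $\Rightarrow$ fake weighted projective space.''

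For the converse I would start from $X=(\CC^{d+1}\setminus\{0\})/G$ with $G=\CC^*\times\Gamma$ acting diagonally. Dualizing the inclusion $\Hom(G,\CC^*)\hookrightarrow\ZZ^{d+1}$ produces a surjection $\ZZ^{d+1}\to N$ onto a rank $d$ lattice; the images $v_0,\dots,v_d$ of the standard basis vectors are the candidate vertices. Positivity of the $\CC^*$-weights forces $\mathbf 0$ to lie in the interior of the simplex $P=\conv(v_0,\dots,v_d)$, and surjectivity onto $N$ together with the choice of $N$ as the quotient lattice forces the $v_i$ to be primitive. Identifying the GIT quotient with the toric variety of the resulting face fan completes the equivalence; here I would simply cite the standard Cox-quotient description rather than rederive it.

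\emph{The main obstacle} I expect is the primitivity bookkeeping: one must check carefully that the $v_i$ coming from the quotient construction are primitive lattice vectors and, conversely, that requiring primitive vertices is exactly what pins down the correct lattice $N$ (as opposed to a finite-index sublattice, which would correspond to an extra finite factor absorbed into $\Gamma$ or to a non-reduced stabilizer). The rest is a matter of matching the combinatorial data of the face fan to the group-quotient data. Since the proposition is explicitly a recollection citing \cite[Sect.~3]{Ni}, I would keep the argument to a concise verification of this dictionary and defer the GIT/Cox-quotient details to the cited reference.
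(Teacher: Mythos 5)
Your proposal takes essentially the same route as the paper, whose entire proof is the citation to \cite[Sect.~5.1]{CoLiSch}: the Cox quotient presentation of the toric variety of the face fan of a simplex is exactly the dictionary you unfold, with the substantive details deferred to the same standard reference. One small inaccuracy worth noting: in the converse direction the natural map is the surjection $\ZZ^{d+1} \twoheadrightarrow \Hom(G,\CC^*)$ dual to the inclusion $G \hookrightarrow (\CC^*)^{d+1}$ (not an inclusion $\Hom(G,\CC^*)\hookrightarrow \ZZ^{d+1}$), and for a redundant presentation the induced $v_i$ need not be primitive (e.g.\ $G = \CC^*\times \mu_2$ acting on $\CC^2$ with weights $(1,1)$ and signs $(-1,1)$ yields $\PP^1$ with images $\pm 2 \in \ZZ$) --- but since the face fan depends only on the rays, one may rescale the vertices to primitive generators without changing $X$, so the equivalence of the two classes of varieties is unaffected.
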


\begin{proof}
This follows from \cite[Sect.~5.1]{CoLiSch}.
\end{proof}

Two fake weighted projective spaces are isomorphic if and only if their corresponding lattice simplices are isomorphic. The (true) weighted projective spaces among them correspond to lattice simplices whose vertices generate the lattice. Many geometric properties of a fake weighted projective space can be read off the corresponding lattice simplex. Here we focus our attention on the Gorenstein index and the anticanonical degree.

\begin{definition}
The \emph{index} of a lattice polytope $P \subseteq N_\RR$ is the positive integer
\begin{equation*}
    \iota_P \ := \ \min ( \, k \in \ZZ_{\ge 1} ; \, k P^* \text{ is a lattice polytope}\, ).
\end{equation*}
\end{definition}

\begin{lemma}\label{lemma:gorind}
The Gorenstein index of any fake weighted projective space $X = X(P)$ equals the index $\iota_P$ of the corresponding lattice simplex $P \subseteq N_\RR$.
\end{lemma}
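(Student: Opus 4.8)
The plan is to translate the Gorenstein index condition into a statement about the dual simplex $P^*$ via the standard Cartier criterion for toric divisors, and then to observe that for a simplex the vertices of $P^*$ are exactly the facet normals $u_F$. First I would describe the fan explicitly. Writing $v_0, \ldots, v_d$ for the primitive vertices of $P$, the face fan has rays $\RR_{\ge 0} v_i$ with primitive generators $v_i$ (here the primitivity hypothesis is used), and maximal cones $\sigma_j \defeq \cone(v_i ; \, i \neq j)$, one for each facet $F_j = \conv(v_i ; \, i \neq j)$. Since $P$ is a full-dimensional simplex with $\mathbf{0}$ in its interior, each of the $d$-element sets $\{v_i ; \, i \neq j\}$ is a basis of $N_\RR$, so the fan is simplicial.

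Next I would recall the Cartier criterion: a torus-invariant Weil divisor $D = \sum_i a_i D_i$ is Cartier if and only if for every maximal cone $\sigma_j$ there is an $m_j \in M$ with $\bangle{m_j, v_i} = -a_i$ for all rays $v_i$ of $\sigma_j$; see \cite[Sect.~4.2]{CoLiSch}. As $\mathcal{K} = -\sum_i D_i$, the divisor $\iota\mathcal{K}$ has coefficients $a_i = -\iota$, so the criterion demands $m_j \in M$ with $\bangle{m_j, v_i} = \iota$ for all $i \neq j$. Comparing with the defining property $\bangle{u_{F_j}, v_i} = -1$ of the facet normal, and using that $\{v_i ; \, i \neq j\}$ spans $N_\RR$, the unique element of $M_\RR$ solving these equations is $m_j = -\iota\, u_{F_j}$. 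Hence $\iota\mathcal{K}$ is Cartier if and only if $\iota\, u_{F_j} \in M$ for every facet $F_j$ of $P$.

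Finally I would invoke the description $P^* = \conv(u_F ; \, F \text{ facet of } P)$ recalled above. Because $P$ is a simplex, its dual $P^*$ is again a simplex whose $d+1$ vertices are precisely the forms $u_F$, with no redundant points. Therefore $\iota P^*$ is a lattice polytope exactly when each scaled vertex $\iota\, u_F$ lies in $M$, which is the condition obtained in the previous step. Since both the Gorenstein index and the index $\iota_P$ are defined as the least positive integer for which the respective, now equivalent, condition holds, the two minima coincide, giving $\iota_P$ equal to the Gorenstein index of $X$.

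The step demanding the most care is the sign bookkeeping in the Cartier criterion: one must track the conventions $\mathcal{K} = -\sum_i D_i$ and $\bangle{u_F, \cdot} = -1$ consistently so that the candidate form comes out exactly as $-\iota\, u_{F_j}$ (and then use that $M$ is closed under negation to pass to $\iota\, u_{F_j}$). The only genuinely geometric input is that for a simplex the vertices of $P^*$ are precisely the $u_F$; this is what guarantees that ``$\iota\, u_F \in M$ for all $F$'' is equivalent to integrality of \emph{all} vertices of $\iota P^*$, rather than merely of some spanning subset.
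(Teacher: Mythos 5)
Your proof is correct and takes essentially the same approach as the paper: the paper's two-line proof cites the identification of $P^*$ with the polytope of $-\mathcal{K}_X$ (\cite[Sect.~4.3]{CoLiSch}) together with the toric Cartier criterion \cite[Thm.~4.2.8]{CoLiSch}, which is exactly the criterion you unpack to obtain $m_j = -\iota\, u_{F_j}$ on each maximal cone. Your explicit check that the $u_F$ are precisely the vertices of the dual simplex (so that integrality of the $\iota\, u_F$ is equivalent to $\iota P^*$ being a lattice polytope) simply spells out the detail the paper leaves to the citation.
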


\begin{proof}
The dual polytope $P^*$ is the polytope associated to $-\mathcal{K}_X$, see \cite[Sect.~4.3]{CoLiSch}. The assertion follows from \cite[Thm.~4.2.8]{CoLiSch}.
\end{proof}

\begin{lemma}{\rm{(cf.~\cite[p.~111]{Fu})}}\label{lemma:degvol}
Let $X = X(P)$ a $d$-dimensional fake weighted projective space. Then we have $(-\mathcal{K}_X)^d \ = \ \Vol(P^*)$.
\end{lemma}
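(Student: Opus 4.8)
The plan is to reduce to the classical volume formula for ample Cartier divisors on complete toric varieties, as recorded in \cite[p.~111]{Fu}, by passing to a suitable integral multiple of $-\mathcal{K}_X$. The obstruction to applying that formula directly is that $-\mathcal{K}_X$ is in general only $\QQ$-Cartier: for a singular fake weighted projective space the anticanonical divisor need not be Cartier, so the self intersection number $(-\mathcal{K}_X)^d$ is computed through the rational intersection product on the $\QQ$-factorial variety $X$ rather than via an honest line bundle.

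First I would recall, as in the proof of Lemma \ref{lemma:gorind}, that $P^*$ is the polytope associated to the divisor $-\mathcal{K}_X$, and that this divisor is ample since $X$ is Fano. Writing $\iota := \iota_P$ for the index of $P$, Lemma \ref{lemma:gorind} tells us that $\iota(-\mathcal{K}_X)$ is Cartier, and it is moreover ample, being a positive multiple of an ample divisor. By definition of the index, $\iota P^*$ is a lattice polytope, and it is precisely the polytope associated to the ample Cartier divisor $\iota(-\mathcal{K}_X)$, since scaling an ample divisor by a positive integer scales its associated polytope by the same factor.

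Next I would apply the volume formula for an ample Cartier divisor $D$ on a complete $d$-dimensional toric variety, namely $D^d = \Vol(P_D)$, to $D = \iota(-\mathcal{K}_X)$, which gives
$$
\bigl(\iota(-\mathcal{K}_X)\bigr)^d \ = \ \Vol(\iota P^*).
$$
On the left, multilinearity of the intersection product over $\QQ$ yields $(\iota(-\mathcal{K}_X))^d = \iota^d (-\mathcal{K}_X)^d$; on the right, homogeneity of the normalized volume under dilation gives $\Vol(\iota P^*) = \iota^d \Vol(P^*)$. Dividing both sides by $\iota^d$ then produces the claimed identity $(-\mathcal{K}_X)^d = \Vol(P^*)$.

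The main obstacle, and the only point genuinely requiring care, is the passage from the Cartier to the $\QQ$-Cartier setting: one must ensure that $(-\mathcal{K}_X)^d$ is defined through the rational intersection theory on $X$ and that this rational product is multilinear, so that clearing the factor $\iota^d$ is legitimate. Once this bookkeeping is in place, the remaining ingredients are the elementary scaling behavior of the normalized volume and the cited toric volume formula.
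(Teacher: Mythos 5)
Your proposal is correct and matches the paper's approach: the paper gives no written proof beyond the citation of \cite[p.~111]{Fu}, and your argument is exactly the standard elaboration of that reference, applying the toric volume formula $D^d = \Vol(P_D)$ to the ample Cartier divisor $\iota(-\mathcal{K}_X)$ with polytope $\iota P^*$ and then cancelling $\iota^d$ via multilinearity of the rational intersection product and homogeneity of the normalized volume. Your care about the $\QQ$-Cartier bookkeeping is precisely the detail the paper leaves implicit, so nothing is missing.
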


Any weighted projective space $\PP(q_0,\dots,q_d)$ is up to an isomorphism uniquely determined by its weights $(q_0,\dots,q_d)$. More generally we assign weights to any lattice simplex $P \subseteq N_\RR$.

\begin{definition}{(cf.~\cite{Co,Ni})}\label{def:weightsys}
A \emph{weight system $Q$ (of length $d$)} is a $(d+1)$-tuple $Q = (q_0, \dots, q_d)$ of positive integers. We call
\begin{equation*}
    |Q| \ := \ q_0 + \dots + q_d, \qquad \lambda_Q \ := \ \gcd(Q), \qquad Q_{\red} := Q/\lambda_Q
\end{equation*}
the \emph{total weight}, the \emph{factor} and the \emph{reduction} of $Q$. A weight system $Q$ is called \emph{reduced} if it coincides with its reduction and it is called \emph{well-formed} if we have $\gcd(q_j \, ; \, j = 0,\dots,d,\ j \ne i ) = 1$ for all $i = 0,\dots, d$.
\end{definition}

\begin{definition}{(cf.~\cite{Co,Ni})}
To any lattice simplex $P = \conv(v_0,\dots,v_d) \subseteq N_\RR$ we associate a weight system by
\begin{equation*}
    Q_P \ := \ (q_0,\dots,q_d), \qquad q_i \ := \ |\det(\, v_j ; \, j = 0,\dots, d, \ j \ne i\,)|.
\end{equation*}
\end{definition}

The weight systems of isomorphic lattice simplices coincide up to order. Denote by $v_0,\dots,v_d \in N$ the vertices of the lattice simplex $P \subseteq N_\RR$. The reduction $(Q_P)_{\red}$ is the unique reduced weight system satisfying
\begin{equation*}
    \sum\limits_{i=0}^d q_i v_i \ = \ 0.
\end{equation*}
Moreover, if the vertices of $P$ are primitive, then $(Q_P)_\red$ is well-formed. Following the naming convention in \cite{Ni} we call $\lambda_P := [N:N_P]$ the \emph{factor} of the lattice simplex $P \subseteq N_\RR$, where $N_P \subseteq N$ is the sublattice generated by the vertices of $P$. In \cite{Ka} it is called the \emph{multiplicity} of $P$.

\begin{lemma}{\rm{(cf. \cite[Lemma~2.4]{Co})}}
For any lattice simplex $P$ we have $\lambda_P = \lambda_{Q_P}$.
\end{lemma}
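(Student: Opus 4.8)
The plan is to reduce the claim to the standard fact that the index of the sublattice generated by the columns of an integer matrix equals the greatest common divisor of its maximal minors. First I would fix a lattice basis, identifying $N \cong \ZZ^d$, and form the $d \times (d+1)$ matrix $A$ whose columns are the vertices $v_0, \dots, v_d$. Since $P$ is full dimensional, these vertices span $N_\RR$, so $A$ has rank $d$ and its image is precisely the full-rank sublattice $N_P$. Thus $\lambda_P = [N : N_P] = [\ZZ^d : \im A]$, and the task is to show this index equals $\gcd(q_0, \dots, q_d) = \lambda_{Q_P}$.

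The key tool is the Smith normal form $A = U D V$ with $U \in \GL_d(\ZZ)$, $V \in \GL_{d+1}(\ZZ)$, and $D$ carrying the invariant factors $e_1 \mid \dots \mid e_d$ on its diagonal together with one zero column. On the one hand, $\im A = U \cdot \im D$, so $[\ZZ^d : \im A] = [\ZZ^d : \im D] = e_1 \cdots e_d$. On the other hand, the $d$-th determinantal divisor, i.e. the gcd of all $d \times d$ minors, is invariant under left and right multiplication by unimodular matrices, and for the diagonal shape of $D$ it equals $e_1 \cdots e_d$. Hence the gcd of the maximal minors of $A$ also equals $e_1 \cdots e_d = [\ZZ^d : \im A]$.

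It remains to identify the maximal minors of $A$ with the weights. A $d \times d$ submatrix of $A$ is obtained by deleting exactly one column, say the $i$-th, and its determinant is $\pm \det(v_j ; \, j \ne i)$, whose absolute value is $q_i$ by definition of $Q_P$. Therefore the gcd of the maximal minors of $A$ is $\gcd(q_0, \dots, q_d) = \lambda_{Q_P}$, and combining this with the previous paragraph yields $\lambda_P = \lambda_{Q_P}$.

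Since all ingredients are standard facts about integer matrices, I expect no genuine obstacle; the only points requiring care are verifying that $N_P$ has full rank, which is guaranteed by the full dimensionality of $P$, and the bookkeeping of signs, which is harmless because both the index and the weights are defined through absolute values. The conceptual crux is the $\GL$-invariance of the determinantal divisors, which I would either cite or dispatch in a single line via the multiplicativity of minors.
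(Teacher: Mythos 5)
Your proof is correct and complete. Note that the paper itself offers no internal argument for this lemma---it is quoted directly from Conrads \cite{Co}*{Lemma~2.4}---so your write-up functions as a self-contained substitute rather than an alternative to an in-paper proof. Your route is the standard one for the underlying fact: for an integer matrix of full row rank, the index of the sublattice generated by its columns equals the gcd of its maximal minors, proved via Smith normal form $A = UDV$ together with the $\GL$-invariance of the $d$-th determinantal divisor. All the steps check out: $N_P$ has full rank because the vertices linearly span $N_\RR$ (the affine hull is all of $N_\RR$ and contains the origin, by the paper's standing convention that $\mathbf{0}$ lies in the interior of $P$); the maximal minors of the vertex matrix are, up to sign, exactly the weights $q_i$ of Definition~\ref{def:weightsys} applied to $P$; and for the diagonal matrix $D$ the only nonvanishing maximal minor is $e_1 \cdots e_d$, which also equals $[\ZZ^d : \im D]$. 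The one cosmetic remark: you do not actually need every $q_i$ to be nonzero for the gcd computation, though positivity of all $q_i$ (implicit in the paper's definition of $Q_P$ as a weight system) does hold here, again because $\mathbf{0}$ is interior and hence lies on no facet hyperplane.
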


If $P$ has primitive vertices then its factor $\lambda_P$ coincides with the order of the torsion part of $\Cl(X(P))$. In particular $X(P)$ is a weighted projective space if and only if $Q_P$ is reduced. The following Theorem is a reformulation of~\cite[4.5--4.7]{Co}. Compare also~\cite[Thm.~5.4.5]{Ba94} and~\cite[Prop.~2]{BoBo}.

\begin{theorem}\label{thm:wpsfwps}
To any well-formed weight system $Q$ of length $d$ there exists a $d$-dimensional lattice simplex $P_Q \subseteq N_\RR$, unique up to an isomorphism, with $Q_{P_Q} = Q$. Any fake weighted projective space $X = X(P)$ with $(Q_P)_\red = Q$ is isomorphic to the quotient of $\PP(Q)$ by the action of the finite group $N/N_P$ corresponding to the inclusion $N_P \subseteq N$.
\end{theorem}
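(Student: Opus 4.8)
The plan is to treat the two assertions separately: first the existence and uniqueness of $P_Q$, then the quotient presentation of an arbitrary fake weighted projective space. For existence I would give an explicit quotient construction. Set $N \defeq \ZZ^{d+1}/\ZZ\cdot Q$ and let $v_i \in N$ be the residue class of the $i$-th standard basis vector $e_i$. Since $Q$ is well-formed it is in particular reduced, so $Q$ is primitive and $N$ is torsion free of rank $d$. By construction $\sum_{i=0}^d q_i v_i = 0$, and the $\QQ$-space of relations among $v_0,\dots,v_d$ is one-dimensional, spanned by $Q$; as every $q_i$ is nonzero, no relation is supported on a proper subset, so any $d$ of the $v_i$ are linearly independent. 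Hence $P_Q \defeq \conv(v_0,\dots,v_d)$ is a genuine $d$-simplex with $\mathbf{0}$ in its interior. The $v_i$ generate $N$, so $N_{P_Q} = N$ and $\lambda_{P_Q} = 1$; together with the relation $\sum q_i v_i = 0$ and the characterization of $(Q_{P_Q})_{\red}$ as the unique reduced weight system satisfying this relation, this gives $Q_{P_Q} = (Q_{P_Q})_{\red} = Q$.

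The one place where well-formedness, rather than mere reducedness, is indispensable is the primitivity of the vertices. I would argue by contradiction: if some vertex, say $v_0$, were divisible by a prime $p$ in $N$, then $e_0 = p\,w + m\,Q$ for suitable $w \in \ZZ^{d+1}$ and $m \in \ZZ$. Reading off the components $j \neq 0$ gives $p \mid m q_j$ for all such $j$, and the well-formedness condition $\gcd(q_j \,;\, j \neq 0) = 1$ forces $p \mid m$; the $0$-th component then reads $1 = p\,w_0 + m\,q_0$, yielding the contradiction $p \mid 1$. For uniqueness I would show that every competitor is canonically this same quotient. Let $P = \conv(v_0',\dots,v_d') \subseteq N'_\RR$ be any lattice simplex with $Q_P = Q$. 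As $Q$ is reduced we have $\lambda_P = 1$, so the $v_i'$ generate $N'$ and the surjection $\ZZ^{d+1} \twoheadrightarrow N'$, $e_i \mapsto v_i'$, has a kernel of rank $1$. This kernel is saturated, since $N'$ is a lattice, and it contains the primitive vector $Q$ by $\sum q_i v_i' = 0$; hence it equals $\ZZ\cdot Q$. Thus $N' \cong \ZZ^{d+1}/\ZZ\cdot Q = N$ by an isomorphism sending $v_i' \mapsto v_i$, which is precisely an isomorphism of lattice simplices $P \cong P_Q$.

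For the second assertion, let $X = X(P)$ be a fake weighted projective space with $(Q_P)_{\red} = Q$ and let $\Sigma$ be the face fan of $P$. Regarding the same vertices inside the sublattice $N_P$, they generate $N_P$ (factor $1$) and, since passing from $N$ to the index-$\lambda_P$ sublattice $N_P$ scales every relevant determinant by $\lambda_P$, the associated weight system is exactly $(Q_P)_{\red} = Q$. Therefore the toric variety $X_{N_P}(\Sigma)$ is $\PP(Q) = X(P_Q)$. The inclusion $N_P \subseteq N$ of finite index is a morphism of fans for $\Sigma$ and hence induces a toric morphism $\PP(Q) \to X$, which by the standard description of toric morphisms attached to finite-index sublattice inclusions with a common fan exhibits $X$ as the geometric quotient of $\PP(Q)$ by the finite abelian group $N/N_P$, acting through the identification of $N/N_P$ with the kernel of the isogeny $T_{N_P} \to T_N$ of the respective tori.

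The genuinely delicate points are the primitivity argument, where the well-formedness hypothesis must be invoked in exactly the right componentwise form, and the bookkeeping in the uniqueness step showing the kernel is saturated and therefore equals $\ZZ\cdot Q$. The identification of the quotient group as $N/N_P$ and the quotient morphism itself are standard toric input, which I would cite from \cite{CoLiSch} (or the reformulated statements \cite[4.5--4.7]{Co}) rather than reprove.
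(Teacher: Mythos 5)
Your proof is correct, but it follows a genuinely different route from the paper: the paper offers no argument at all for this theorem, stating only that it is a reformulation of \cite[4.5--4.7]{Co} (compare \cite[Thm.~5.4.5]{Ba94}, \cite[Prop.~2]{BoBo}), whereas you give a self-contained proof that essentially reconstructs Conrads' construction. Your key steps all check out: $N=\ZZ^{d+1}/\ZZ\cdot Q$ is torsion free because well-formedness forces $Q$ to be primitive; the rank-one relation space supported on no proper subset gives a genuine $d$-simplex with $\mathbf{0}$ interior; the componentwise well-formedness argument for primitivity of the vertices is exactly right and correctly isolates the one place where reducedness alone would not suffice; and the uniqueness step (kernel of $\ZZ^{d+1}\twoheadrightarrow N'$ is rank one, saturated since $N'$ is torsion free, and contains the primitive vector $Q$, hence equals $\ZZ\cdot Q$) is clean. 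Two small points of bookkeeping: your identification $Q_{P_Q}=(Q_{P_Q})_{\red}$ tacitly uses the paper's lemma $\lambda_P=\lambda_{Q_P}$ (cf.\ \cite[Lemma~2.4]{Co}), which you should cite explicitly at that step; and in the second assertion the determinants with respect to a basis of the index-$\lambda_P$ sublattice $N_P$ are $1/\lambda_P$ times those with respect to a basis of $N$ (they shrink, not grow), though your conclusion that the weight system of $P$ viewed in $N_P$ is $(Q_P)_{\red}=Q$ is correct. What your approach buys is transparency: the reader sees precisely which hypothesis (well-formedness versus reducedness) drives which conclusion, at the cost of reproving material the paper deliberately outsources; the paper's citation-only treatment keeps the exposition short since the theorem is only infrastructure for Corollary \ref{cor:gidegwpsfwps} and the degree bounds.
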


As an immediate consequence of Theorem \ref{thm:wpsfwps} we can relate the Gorenstein index and the anticanonical degree of a fake weighted projective space $X(P)$ to those of the weighted projective space $\PP((Q_P)_\red)$.

\begin{corollary}\label{cor:gidegwpsfwps}
Let $X = X(P)$ a $d$-dimensional fake weighted projective space and let $X' = \PP((Q_P)_\red)$ the corresponding weighted projective space. Then the Gorenstein index of $X$ is a multiple of the Gorenstein index of $X'$. Moreover we have $\lambda_P (-\mathcal{K}_X)^d = (-\mathcal{K}_{X'})^d$. In particular, $(-\mathcal{K}_X)^d = (-\mathcal{K}_{X'})^d$ holds if and only if $X$ is isomorphic to $X'$.
\end{corollary}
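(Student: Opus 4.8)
The plan is to realize both $X$ and $X'$ from the \emph{same} simplex $P$, read with respect to two different lattices, and then to track how the relevant invariants transform under this change of lattice. Set $Q := (Q_P)_\red$ and recall that the defining relation $\sum_i q_i v_i = 0$ of the reduced weight system depends only on the vertices $v_0,\dots,v_d$ of $P$ and not on the ambient lattice. Reading $P$ inside the sublattice $N_P \subseteq N$ generated by its vertices, these vertices generate $N_P$, so the weight system of $P$ computed with respect to $N_P$ has factor $1$, is therefore already reduced, and equals $Q$. By Theorem \ref{thm:wpsfwps} this identifies the weighted projective space $X(P,N_P)$ with $\PP(Q) = X'$. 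Thus $X = X(P)$ and $X'$ both arise from the one simplex $P$, now paired with the lattice inclusion $N_P \subseteq N$ of index $\lambda_P$.

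Dualizing the inclusion $N_P \subseteq N$ yields $M = \Hom(N,\ZZ) \subseteq \Hom(N_P,\ZZ) =: M'$ of the same index $[M':M] = \lambda_P$, both lattices sitting inside the common real space $M_\RR = M'_\RR$. The crucial observation is that the dual polytope $P^*$ is literally the \emph{same} subset of this space in either reading, since it is defined purely through the pairing $\bangle{\cdot\,,\cdot}$; only the lattice structure on the ambient space, and hence the notions of lattice polytope and of normalized volume, change.

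For the Gorenstein index I would argue through the vertices $u_F$ of $P^*$. The admissible dilation factors $\{ k \in \ZZ_{\ge 1} ; \, k\, u_F \in M \text{ for every facet } F \text{ of } P \}$ are exactly the positive multiples of $\iota_P$, and similarly for $M'$ and $\iota_{P'}$. The inclusion $M \subseteq M'$ makes every multiple of $\iota_P$ a multiple of $\iota_{P'}$, so $\iota_{P'}$ divides $\iota_P$; by Lemma \ref{lemma:gorind} this is the first assertion. For the degree I would invoke the standard scaling of normalized volume under a sublattice: from $[M':M] = \lambda_P$ one obtains $\Vol_M(P^*) = \Vol_{M'}(P^*)/\lambda_P$, which by Lemma \ref{lemma:degvol} reads $\lambda_P (-\mathcal{K}_X)^d = (-\mathcal{K}_{X'})^d$.

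The final equivalence is then formal. As $(-\mathcal{K}_X)^d = \Vol(P^*) > 0$, the identity $\lambda_P (-\mathcal{K}_X)^d = (-\mathcal{K}_{X'})^d$ gives equality of the two degrees precisely when $\lambda_P = 1$, that is when $N_P = N$; in that case $X(P)$ is the quotient of $\PP(Q)$ by the trivial group $N/N_P$ and hence isomorphic to $X'$, while conversely any isomorphism $X \cong X'$ preserves the anticanonical degree. The one genuinely delicate point is the lattice-change bookkeeping of the middle step — checking that $P^*$ is the very same set for both lattices and that its normalized volume scales by exactly the factor $\lambda_P$ — after which all three assertions drop out.
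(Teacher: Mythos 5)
Your proof is correct and takes essentially the same route as the paper: where the paper invokes Theorem~\ref{thm:wpsfwps} to produce an integer matrix $H$ with $\det H = \lambda_P$ and $P = H P_Q$ and then dualizes to $P_Q^* = H^* P^*$, you phrase the identical index-$\lambda_P$ lattice change intrinsically via $N_P \subseteq N$ and its dual inclusion $M \subseteq M'$, keeping $P^*$ fixed as a set. The bookkeeping you flag as delicate --- divisibility of the index coming from $M \subseteq M'$ and the scaling $\Vol_{M'}(P^*) = \lambda_P \Vol_M(P^*)$ --- is precisely what the paper's terse appeal to Lemmas~\ref{lemma:gorind} and~\ref{lemma:degvol} encodes, so the two arguments agree up to a choice of coordinates.
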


\begin{proof}
By Theorem~\ref{thm:wpsfwps} there is a square matrix $H$ in a lattice basis of $N$ with determinant $\lambda_P$ such that $P = H P_Q$ holds. Dualizing yields $P_Q^* = H^* P^*$. Now apply Lemma \ref{lemma:gorind} and Lemma \ref{lemma:degvol}.
\end{proof}

\section{Unit fraction partitions}\label{section:gi-uf}

To any $d$-dimensional lattice simplex $P \subseteq N_\RR$ of index $\iota$ we assign a partition of $1/\iota$ into a sum of $d+1$ unit fractions. The main result of this section is Proposition \ref{prop:uf-vol} where give a formula to compute the normalized volume of the dual polytope $P^*$ in terms of the denominators of these unit fractions.

\begin{definition}
Let $\iota \in \ZZ_{\ge 1}$. A tuple $A = (a_1,\dots,a_n) \in \ZZ^n_{\ge 1}$ is called a \emph{uf-partition of $\iota$} (\emph{of length $n$}) if the following holds:
\begin{equation*}
    \frac{1}{\iota} \ = \ \sum\limits_{k=1}^n \frac{1}{a_k}.
\end{equation*}
\end{definition}

\begin{proposition}\label{prop:P-uf}
Let $P \subseteq N_\RR$ a $d$-dimensional lattice simplex of index $\iota$ with weight system $Q_P = (q_0, \dots,q_d)$. Then
\begin{equation*}
        A(P) \ := \ \left( \frac{\iota |Q_P|}{q_0}, \dots, \frac{\iota |Q_P|}{q_d} \right)
\end{equation*}
is a uf-partition of $\iota$ of length $d+1$. We call it the uf-partition of $\iota$ associated to $P$.
\end{proposition}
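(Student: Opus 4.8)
The plan is to prove that $A(P)$ is a uf-partition of $\iota$ by showing two things: first, that each entry $\iota|Q_P|/q_i$ is a positive integer, and second, that the reciprocals sum to $1/\iota$. The summation identity is the easy part. Writing $q = |Q_P| = q_0 + \dots + q_d$, we compute
\begin{equation*}
    \sum_{i=0}^d \frac{q_i}{\iota |Q_P|} \ = \ \frac{1}{\iota |Q_P|} \sum_{i=0}^d q_i \ = \ \frac{|Q_P|}{\iota |Q_P|} \ = \ \frac{1}{\iota},
\end{equation*}
so the reciprocals of the proposed entries indeed partition $1/\iota$, and the length is $d+1$ as there are $d+1$ weights. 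The content of the proposition is therefore really the integrality claim: that $q_i \mid \iota |Q_P|$ for every $i$.

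First I would reduce the integrality statement to the geometry of the dual polytope via Lemma~\ref{lemma:degvol} and the index. The key geometric fact I would exploit is that the facet of $P$ opposite to vertex $v_i$ has, with respect to the normalized volume, a contribution proportional to $q_i$: concretely, $P^*$ is the convex hull of the supporting forms $u_{F_i}$, one per facet $F_i = \conv(v_j ; j \ne i)$, and $u_{F_i}$ lies on the ray through a primitive lattice vector at lattice distance governed by $q_i$. More precisely, the rational point $u_{F_i} \in M_\RR$ satisfies $\iota\, u_{F_i} \in M$ by the definition of the index $\iota = \iota_P$ (since $\iota P^*$ is a lattice polytope and vertices of $P^*$ map to lattice points). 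I would then relate the denominator of $u_{F_i}$ to $q_i$.

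The cleanest route is to compute $u_{F_i}$ explicitly. Let $v_0,\dots,v_d$ be the primitive vertices. Choosing a lattice basis of $N$, the supporting form $u_{F_i}$ is determined by $\langle u_{F_i}, v_j\rangle = -1$ for all $j \ne i$. By Cramer's rule the coordinates of $u_{F_i}$ are ratios of determinants, and the common denominator is exactly $\det(v_j ; j \ne i)$ up to sign, i.e.\ $q_i$. Thus $q_i\, u_{F_i} \in M$, and in fact $q_i$ is the smallest such positive multiple because the vertices are primitive and well-formedness of $(Q_P)_\red$ controls the gcd of the numerators. Consequently the index $\iota_P$, being the least $k$ with $k\,u_{F_i} \in M$ for all $i$ simultaneously, is the least common multiple of these individual denominators divided out appropriately; in particular each $q_i$ divides $\iota |Q_P|$. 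The main obstacle is pinning down this last divisibility cleanly: I expect the honest argument to require the relation $\sum q_i v_i = 0$ (the defining relation of the reduced weight system) to show that the numerators appearing in $q_i\, u_{F_i}$ together with $q_i$ and $|Q_P|$ interact so that $q_i \mid \iota |Q_P|$, rather than merely $q_i \mid \iota \cdot (\text{something})$.

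An alternative and perhaps more robust approach, which I would fall back on if the explicit determinant bookkeeping becomes unwieldy, is to pass through the weighted projective space $\PP((Q_P)_\red)$ using Corollary~\ref{cor:gidegwpsfwps} and Theorem~\ref{thm:wpsfwps}. For a genuine weighted projective space the integrality $q_i \mid \iota |Q|$ is a standard computation of the Gorenstein index of $\PP(q_0,\dots,q_d)$: the Cartier condition on $\iota \mathcal{K}$ at the torus-fixed point corresponding to the cone over $F_i$ translates directly into $q_i \mid \iota |Q|$. Since the Gorenstein index of $X(P)$ is a multiple of that of the associated weighted projective space, and the weights $q_i$ of $P$ are $\lambda_P$ times the reduced weights while $|Q_P| = \lambda_P |(Q_P)_\red|$, the divisibility lifts from the reduced case to $P$. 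I would present whichever of these two arguments yields the shortest verification, and expect the determinant computation of $u_{F_i}$ to be the crux in either case.
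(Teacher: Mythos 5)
Your reduction of the statement to the integrality claim $q_i \mid \iota |Q_P|$ is exactly right, and the summation computation matches the paper's. But your primary route (Cramer's rule) has a genuine gap at precisely the point you flag as the ``main obstacle,'' and you never close it. Cramer's rule does give $q_i\, u_{F_i} \in M$, and the definition of the index gives $\iota\, u_{F_i} \in M$; but these two facts together only control the exact denominator $d_i$ of $u_{F_i}$, yielding $d_i \mid \gcd(q_i,\iota)$, which is far weaker than $q_i \mid \iota|Q_P|$. Your intermediate claim that $q_i$ is the \emph{smallest} positive integer with $q_i u_{F_i} \in M$ is moreover false in general: for $\PP(1,1,2)$ the index is $1$, so every $u_{F_i}$ already lies in $M$, yet $q_2 = 2$. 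So the sentence beginning ``Consequently the index $\iota_P$\dots in particular each $q_i$ divides $\iota|Q|$'' is a non sequitur. The missing step is the one you correctly anticipate but do not execute: evaluate $\iota u_{F_i}$ on the relation $\sum_{j} q_j v_j = 0$. Since $\langle u_{F_i}, v_j \rangle = -1$ for all $j \ne i$, this gives
\begin{equation*}
0 \ = \ \Bigl\langle \iota u_{F_i}, \sum_{j=0}^d q_j v_j \Bigr\rangle \ = \ \bigl(\langle \iota u_{F_i}, v_i \rangle + 1\bigr) q_i \ - \ \iota |Q_P|,
\end{equation*}
and since $\iota u_{F_i} \in M$ the pairing $\langle \iota u_{F_i}, v_i \rangle$ is an integer, whence $q_i \mid \iota|Q_P|$. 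This three-line pairing argument \emph{is} the paper's entire proof; no determinant bookkeeping, no primitivity of vertices, and no well-formedness enter at all, contrary to your expectation that the determinant computation is the crux.

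Your fallback route through $\PP((Q_P)_\red)$ is structurally sound and non-circular given the paper's ordering (Corollary \ref{cor:gidegwpsfwps} precedes the proposition, and the lifting via $q_i = \lambda_P \bar{q}_i$ and $|Q_P| = \lambda_P |(Q_P)_\red|$ is correct: from $\bar{q}_i \mid \iota' |\bar{Q}|$ and $\iota' \mid \iota$ one gets $q_i \mid \iota |Q_P|$). However, the ``standard computation'' of the Gorenstein index of $\PP(q_0,\dots,q_d)$ that this route cites is, when written out, exactly the pairing computation displayed above, applied at the cone over $F_i$. So as written, Route B defers the crux to a citation rather than supplying it; it would pass only if that standard fact may be invoked without proof, whereas the direct argument settles the matter in one stroke and is what the paper does.
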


\begin{proof}
We show that $A(P)$ consists of positive integers. Let $v_0,\dots,v_d \in N$ the vertices of $P$. For $0 \le i \le d$ let $F_i = \conv(v_0,\dots,\hat{v}_i,\dots,v_d)$ the $i$-th facet of $P$, where $\hat{v}_i$ means that $v_i$ is omitted. We have
\begin{equation*}
    0
    \ = \
    \sum\limits_{j=0}^d q_j \bangle{\iota u_{F_i},v_j}
    \ = \
    \bangle{\iota u_{F_i},v_i}q_i\, - \iota\! \sum\limits_{\tiny\begin{array}{l} j\! =\! 0, \\ j\! \ne\! i \end{array}}^d q_j
    \ = \
    (\bangle{\iota u_{F_i},v_i} + 1) q_i - \iota |Q_P|.
\end{equation*}
By definition of $\iota$ we have $\iota u_{F_i} \in M$. Thus $q_i$ divides $\iota |Q_P|$, so $A(P)$ consists of positive integers. Now summing over the reciprocals of $A(P)$ we see that it is in fact a uf-partition of $\iota$.
\end{proof}

\begin{proposition}\label{prop:uf-vol}
For any $d$-dimensional lattice simplex $P \subseteq N_\RR$ with associated uf-partition $A(P) = (a_0,\dots,a_d)$ of $\iota_P$ we have
\begin{equation*}
    \lambda_P \Vol(\iota_P P^*) \ = \ \frac{a_0\cdots a_d}{\lcm(a_0,\dots,a_d)}.
\end{equation*}
\end{proposition}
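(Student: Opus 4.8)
The plan is to compute everything through the pairing of the two vertex systems. Write $v_0,\dots,v_d$ for the vertices of $P$ and $u_{F_0},\dots,u_{F_d}$ for the vertices of $P^*$, so that $\bangle{u_{F_i},v_j} = -1$ for $i \ne j$. First I would evaluate the diagonal pairings: applying $u_{F_i}$ to the weight relation $\sum_j q_j v_j = 0$ (the reduced version of which holds by the discussion after Definition~\ref{def:weightsys}) and using $a_i = \iota|Q_P|/q_i$ gives $\bangle{u_{F_i},v_i} = a_i/\iota - 1$, exactly as in the proof of Proposition~\ref{prop:P-uf}. Setting $w_i := \iota\, u_{F_i}$, which lies in $M$ by the definition of the index $\iota = \iota_P$, and collecting the $v_j$ as columns of a $d\times(d+1)$ matrix $V$ and the $w_i$ as columns of $W$, these computations combine into the single clean identity
\begin{equation*}
    W^{\top} V \ = \ \mathrm{diag}(a_0,\dots,a_d) - \iota\,\eins\eins^{\top},
\end{equation*}
where $\eins$ is the all-ones vector; call this $(d+1)\times(d+1)$ matrix $B$.

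The key step extracts the two weight systems from the minors of $B$. Deleting row $i$ and column $i$ from $W^{\top}V$ yields $(W_{\hat\imath})^{\top} V_{\hat\imath}$, where $W_{\hat\imath}, V_{\hat\imath}$ are the square matrices obtained by dropping the $i$-th column; hence its determinant is $\pm\det(W_{\hat\imath})\det(V_{\hat\imath}) = \pm\, p_i q_i$, where $Q_P = (q_0,\dots,q_d)$ and $Q_{\iota P^*} = (p_0,\dots,p_d)$ are the weight systems of $P$ and of the lattice simplex $\iota P^*$. On the other hand, the same minor of $B$ is computed by the matrix determinant lemma: it equals $\big(\prod_{k\ne i} a_k\big)\big(1 - \iota\sum_{k\ne i} a_k^{-1}\big)$, and the uf-partition identity $\sum_k a_k^{-1} = 1/\iota$ collapses the second factor to $\iota/a_i$. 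Thus
\begin{equation*}
    p_i\, q_i \ = \ \frac{\iota\, a_0\cdots a_d}{a_i^{\,2}}, \qquad\text{so}\qquad p_i \ = \ \frac{a_0\cdots a_d}{a_i\,|Q_P|}
\end{equation*}
after substituting $q_i = \iota|Q_P|/a_i$.

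To assemble the volume I would decompose $\iota P^*$ into the $d+1$ cones over its facets with apex the interior point $\mathbf 0$; each cone $\conv(\mathbf 0, w_j; j\ne i)$ has normalized volume $p_i$, so $\Vol(\iota P^*) = \sum_i p_i = (a_0\cdots a_d)/|Q_P| \cdot \sum_i a_i^{-1} = (a_0\cdots a_d)/(\iota|Q_P|)$, again by the uf-partition identity. It remains to identify the factor, which is the number-theoretic heart of the statement: I claim $\lcm(a_0,\dots,a_d) = \iota|Q_P|/\lambda_P$. Writing $T := \iota|Q_P|$, we have $a_i = T/q_i$ with $q_i \mid T$, and a prime-by-prime valuation argument gives $\lcm_i(T/q_i) = T/\gcd_i(q_i)$; since $\gcd_i(q_i) = \lambda_{Q_P} = \lambda_P$, the claim follows. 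Multiplying by $\lambda_P$ then yields
\begin{equation*}
    \lambda_P \Vol(\iota_P P^*) \ = \ \frac{\lambda_P\, a_0\cdots a_d}{\iota|Q_P|} \ = \ \frac{a_0\cdots a_d}{\lcm(a_0,\dots,a_d)},
\end{equation*}
as desired. I expect the main obstacle to be the bookkeeping that converts the per-index products $p_iq_i$ into the global volume, specifically the $\lcm$–$\gcd$ identity and keeping the normalization of $\Vol$ with respect to $M$ straight; the matrix identity for $B$ and its minor factorization, by contrast, are the load-bearing but essentially mechanical core.
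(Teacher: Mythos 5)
Your proof is correct, but it takes a genuinely different and more self-contained route than the paper's. The paper's proof of Proposition~\ref{prop:uf-vol} is a two-line computation that outsources the geometry: it cites \cite[Prop.~3.6]{Ni} for the relation $Q_{\iota_P P^*} = \iota_P^d\, m_{Q_P}\, Q_P$ with $m_{Q_P} = |Q_P|^{d-1}/(q_0\cdots q_d)$, uses $\Vol(\iota_P P^*) = |Q_{\iota_P P^*}|$ without proof, and gets the factor $\lambda_P$ from Corollary~\ref{cor:ws-ufp}, i.e.\ $\iota_P|Q_P| = \lambda_P\, t_{A(P)}$, which the paper in turn derives through Proposition~\ref{prop:ws-ufp}(iii). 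You instead reprove the needed instance of Nill's proposition from scratch: your Gram identity $W^{\top}V = \mathrm{diag}(a_0,\dots,a_d) - \iota\,\eins\eins^{\top}$ exhibits exactly the matrix $G(\iota;a_0,\dots,a_d)$ of Lemma~\ref{lemma:detG}, and equating the two evaluations of its $i$-th principal minor --- $\det(W_{\hat\imath})\det(V_{\hat\imath}) = \pm\, p_i q_i$ on one side, $\iota\, a_0\cdots a_d/a_i^2$ via the matrix determinant lemma and $\sum_k a_k^{-1} = 1/\iota$ on the other --- recovers $p_i = \iota_P^d\, m_{Q_P}\, q_i$ after substituting $a_k = \iota|Q_P|/q_k$; the sign ambiguity is harmless because the minor is visibly positive, and the factorization of the minor is legitimate once you fix dual bases of $M$ and $N$. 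Your valuation identity $\lcm_i(T/q_i) = T/\gcd_i(q_i)$ with $T = \iota|Q_P|$, combined with $\gcd(Q_P) = \lambda_{Q_P} = \lambda_P$ from the cited lemma of Conrads \cite{Co}, likewise replaces the paper's detour through Proposition~\ref{prop:ws-ufp}. What your version buys is transparency --- the weights of $\iota P^*$ appear as complementary minors of the same matrix $G$ that governs the uf-partition, and $\Vol(\iota P^*) = \sum_i p_i$ is proved rather than quoted --- at the cost of length; the paper buys brevity by leaning on \cite{Ni}. Two small points: your pieces $\conv(\mathbf{0}, w_j;\, j\ne i)$ are simplices rather than cones; and your diagonal entry $\bangle{\iota u_{F_i}, v_i} = a_i - \iota$ is the correct one --- the displayed computation in the paper's proof of Proposition~\ref{prop:P-uf} reads $(\bangle{\iota u_{F_i},v_i}+1)q_i - \iota|Q_P|$ where it should read $(\bangle{\iota u_{F_i},v_i}+\iota)q_i - \iota|Q_P|$, a typo for $\iota \ge 2$ that your independent computation silently corrects.
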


Proposition \ref{prop:uf-vol} generalizes \cite[Prop.~4.5.5]{Ni} to the case $\iota \ge 2$. For the proof of Proposition \ref{prop:uf-vol} and in preparation for the proof of Theorem \ref{thm:main} we extend Batyrev's correspondence between weight systems of reflexive polyhedra and uf-partitions of $1$ given in \cite[Thm.~5.4.3]{Ba94} to the case of higher indices.

\begin{definition}
The \emph{index of a weight system} $Q = (q_0,\dots,q_d)$ is the positive integer
\begin{equation*}
    \iota_Q \ := \ \min ( \, k \in \ZZ_{\ge 1} ; \, q_i \mid k |Q| \text{ for all } i = 0,\dots,d \, ).
\end{equation*}
\end{definition}

\begin{definition}
A tuple $A = (a_1,\dots,a_n) \in \ZZ^n_{\ge 1}$ is called a \emph{uf-partition} if it is a uf-partition of $\iota$ for some $\iota \in \ZZ_{\ge 1}$. For a uf-partition $A = (a_1,\dots,a_n)$ of $\iota$ we call
\begin{equation*}
    t_A \ := \ \lcm(a_1,\dots,a_n), \qquad \lambda_A \ := \ \gcd(\iota,a_1,\dots,a_n), \qquad A_{\red} \ := \ A/\lambda_A
\end{equation*}
the \emph{total weight}, the \emph{factor} and the \emph{reduction} of $A$. A uf-partition $A$ is called \emph{reduced} if it coincides with its reduction and it is called \emph{well-formed} if $a_i \mid \lcm(a_j \, ; \, j \ne i)$ holds for all $i = 1,\dots,n$
\end{definition}

\begin{proposition}\label{prop:ws-ufp}
Let $Q = (q_0,\dots,q_d)$ any weight system of length $d$ and index $\iota$ and let $A = (a_0,\dots,a_d)$ any uf-partition of length $d+1$. Then the following hold:
\begin{enumerate}
    \item $A(Q) := ( \iota |Q|/q_0, \dots, \iota |Q|/q_d )$ is a reduced uf-partition of $\iota$ of length $d+1$.
    \item $Q(A) := ( t_A/a_0, \dots, t_A/a_d )$ is a reduced weight system of length $d$.
    \item $Q(A(Q)) = Q_\red$ and $A(Q(A)) = A_\red$ and this correspondence respects well-formedness.
\end{enumerate}
\end{proposition}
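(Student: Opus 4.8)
The plan is to settle the four integrality/reducedness assertions first and then read off the two composition identities and the well-formedness statement from a single $\lcm$--$\gcd$ duality. Throughout I abbreviate $\iota = \iota_Q$, and when dealing with a uf-partition $A = (a_0,\dots,a_d)$ I write $m$ for the integer it partitions, so that $\lambda_A = \gcd(m,a_0,\dots,a_d)$.

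For (i), integrality of $A(Q)$ is immediate: the definition of the index gives $q_i \mid \iota|Q|$ for every $i$, so each entry $\iota|Q|/q_i$ is a positive integer, and summing reciprocals yields $\sum_i q_i/(\iota|Q|) = |Q|/(\iota|Q|) = 1/\iota$, exhibiting $A(Q)$ as a uf-partition of $\iota$. The one substantial point is that $A(Q)$ is \emph{reduced}, which I would prove by a minimality argument on $\iota$: if some prime $p$ divided $\iota$ together with every entry $\iota|Q|/q_i$, then writing $\iota|Q| = q_i \cdot (\iota|Q|/q_i)$ and cancelling $p$ shows $q_i \mid (\iota/p)|Q|$ for all $i$, contradicting the minimality of $\iota = \iota_Q$; hence $\gcd(\iota, \iota|Q|/q_0, \dots, \iota|Q|/q_d) = 1$. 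For (ii), $a_i \mid t_A$ gives integrality of $Q(A)$, and reducedness is the standard observation that for each prime $p$ the maximal exponent $v_p(t_A) = \max_i v_p(a_i)$ is attained by some index $j$, whence $p \nmid t_A/a_j$; so no prime divides all $t_A/a_i$ and $\gcd(t_A/a_0,\dots,t_A/a_d) = 1$.

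The engine for (iii) is the elementary identity $\lcm(c/q_j \,;\, j \in S) = c/\gcd(q_j \,;\, j \in S)$, valid whenever $q_j \mid c$ for all $j \in S$, which I would verify prime by prime via $v_p(\lcm) = \max_j(v_p(c) - v_p(q_j))$. Applied with $c = \iota|Q|$ and $S$ the full index set it gives $t_{A(Q)} = \iota|Q|/\lambda_Q$, so $Q(A(Q))_i = t_{A(Q)}/(\iota|Q|/q_i) = q_i/\lambda_Q$, i.e. $Q(A(Q)) = Q_\red$. For the other composition, with $Q = Q(A)$ one computes $|Q| = t_A \sum_i 1/a_i = t_A/m$ and checks that $q_i = t_A/a_i$ divides $k|Q|$ exactly when $m \mid k a_i$; hence $\iota_Q = \lcm_i\big(m/\gcd(m,a_i)\big)$, which by a second prime-by-prime comparison equals $m/\gcd(m,a_0,\dots,a_d) = m/\lambda_A$. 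Substituting, $A(Q(A))_i = \iota_Q|Q|/q_i = a_i/\lambda_A$, so $A(Q(A)) = A_\red$. These two identities exhibit $Q \mapsto A(Q)$ and $A \mapsto Q(A)$ as mutually inverse bijections between reduced weight systems of length $d$ and reduced uf-partitions of length $d+1$. Finally, for well-formedness I fix an index $i$ and use the duality with $S = \{\, j \,;\, j \ne i\,\}$: in the reduced situation $a_i = t_A/q_i$ and $\lcm(a_j \,;\, j\ne i) = t_A/\gcd(q_j \,;\, j \ne i)$, so $a_i \mid \lcm(a_j \,;\, j \ne i)$ is equivalent to $\gcd(q_j \,;\, j \ne i) \mid q_i$; since $\gcd(q_0,\dots,q_d) = 1$ this in turn is equivalent to $\gcd(q_j \,;\, j \ne i) = 1$, which is exactly well-formedness of $Q$ at $i$. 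Running this over all $i$ shows the bijection matches well-formed systems with well-formed partitions.

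The main obstacle I anticipate is the index computation $\iota_{Q(A)} = m/\lambda_A$ in the second composition identity: it requires correctly translating the divisibility $q_i \mid k|Q|$ into $m \mid k a_i$ and then recognizing $\lcm_i\big(m/\gcd(m,a_i)\big) = m/\gcd(m,a_0,\dots,a_d)$, both of which are cleanest to settle through $p$-adic valuations. Everything else is bookkeeping around the single $\lcm$--$\gcd$ duality.
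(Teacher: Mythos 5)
Your proposal is correct, and parts (i) and (ii) run exactly as in the paper (integrality from $q_i \mid \iota|Q|$, reducedness of $A(Q)$ by contradicting minimality of the index, reducedness of $Q(A)$ from the definition of the $\lcm$). For part (iii), however, you take a genuinely different route. The paper proves $Q(A(Q)) = Q_\red$ by linear algebra: it observes that both $Q$ and $Q(A(Q))$ lie in the kernel of the matrix $G(\iota;a_0,\dots,a_d)$ of its Lemma \ref{lemma:detG}, shows $\rk(G) = d$ using the determinant formula of that lemma together with the strict inequality $1/\iota > 1/a_0 + \dots + 1/a_{d-1}$, and concludes that the one-dimensional kernel forces proportionality, so the reduced element $Q(A(Q))$ must be $Q_\red$; the identity $A(Q(A)) = A_\red$ is then extracted from a short divisibility computation with $\iota = \lambda\,\iota_Q$, and the well-formedness claim is recast as an equivalence of product/gcd identities. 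You replace all of this by the single valuation-theoretic duality $\lcm(c/q_j \,;\, j \in S) = c/\gcd(q_j \,;\, j \in S)$, which yields the explicit formulas $t_{A(Q)} = \iota|Q|/\lambda_Q$ and $\iota_{Q(A)} = \iota/\lambda_A$ (the latter via the correct translation $q_i \mid k|Q| \Leftrightarrow m \mid k a_i$ and the identity $\lcm_i\bigl(m/\gcd(m,a_i)\bigr) = m/\gcd(m,a_0,\dots,a_d)$, both of which check out prime by prime), from which both composition identities drop out by direct substitution; your well-formedness argument, reducing $a_i \mid \lcm(a_j\,;\,j\ne i)$ to $\gcd(q_j\,;\,j\ne i) = 1$ via the same duality and reducedness of $Q$, is also sound and arguably more transparent than the paper's formulation. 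What each approach buys: the paper's kernel argument avoids any explicit $\lcm$/$\gcd$ bookkeeping and puts Lemma \ref{lemma:detG} to work, while your argument is more elementary and self-contained --- it makes Lemma \ref{lemma:detG} dispensable for this proposition and produces the explicit index formula $\iota_{Q(A)} = \iota/\lambda_A$, which the paper never states but which makes the second identity and the well-formedness correspondence essentially automatic.
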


For the proof of Proposition \ref{prop:ws-ufp} we need the following Lemma.

\begin{lemma}\label{lemma:detG}
For $\iota, a_1,\dots,a_n \in \ZZ$ set
\begin{equation*}
    G(\iota;a_1,\dots,a_n)
    \ := \
    \left[\begin{array}{ccccccc}
        (a_1 - \iota ) && -\iota && \dots && -\iota \\
        &&&&&&\\
        -\iota && (a_2 - \iota) && \ddots && \vdots \\
        &&&\ddots&&&\\
        \vdots && \ddots && (a_{n-1} - \iota) && -\iota \\
        &&&&&&\\
        -\iota && \dots && -\iota && (a_n - \iota)
    \end{array}\right].
\end{equation*}
Then
\begin{equation*}
    \det(G(\iota;a_1,\dots,a_n)) \ = \ a_1\cdots a_n - \iota \sum\limits_{i=1}^n \prod\limits_{j \ne i} a_j.
\end{equation*}
\end{lemma}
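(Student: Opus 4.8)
The plan is to recognize $G(\iota;a_1,\dots,a_n)$ as the rank-one perturbation $\mathrm{diag}(a_1,\dots,a_n) - \iota J$ of a diagonal matrix, where $J = \eins\eins^{\mathrm t}$ denotes the $n \times n$ all-ones matrix, and to compute its determinant by exploiting multilinearity of the determinant in the rows. The asserted identity is a polynomial identity in the entries $\iota, a_1,\dots,a_n$, so it suffices to establish it over any commutative ring; the multilinearity argument does this uniformly and avoids any invertibility hypothesis.

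First I would write the $i$-th row of $G$ as $a_i e_i^{\mathrm t} - \iota\,\eins^{\mathrm t}$, where $e_i$ is the $i$-th standard basis vector and $\eins^{\mathrm t}$ is the all-ones row. Expanding the determinant by multilinearity in each of the $n$ rows produces $2^n$ summands indexed by the subsets $S \subseteq \{1,\dots,n\}$ of rows for which the contribution $a_i e_i^{\mathrm t}$ is chosen, the remaining rows contributing $-\iota\,\eins^{\mathrm t}$.

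Next I would discard the vanishing terms. Any summand in which two or more rows equal the all-ones row $-\iota\,\eins^{\mathrm t}$ has two proportional rows, hence determinant zero. Thus only two types of summands survive: the single term with $S = \{1,\dots,n\}$, whose matrix is $\mathrm{diag}(a_1,\dots,a_n)$ contributing $a_1\cdots a_n$; and, for each $i$, the term in which exactly row $i$ is the all-ones row while every other row $j$ is $a_j e_j^{\mathrm t}$. In the latter matrix the only nonzero permutation is the identity, so its determinant is $-\iota \prod_{j \ne i} a_j$. Summing the surviving contributions gives exactly $a_1\cdots a_n - \iota \sum_{i=1}^n \prod_{j \ne i} a_j$.

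The computation has essentially no obstacle; the only point requiring mild care is the bookkeeping of signs when evaluating the terms with a single all-ones row, which is immediate once one notes that prescribing the off-diagonal rows to be $a_j e_j^{\mathrm t}$ forces the identity permutation and hence a positive sign. As a remark I would record the alternative derivation via the matrix determinant lemma, $\det(D - \iota\,\eins\eins^{\mathrm t}) = \det(D)\,(1 - \iota\,\eins^{\mathrm t} D^{-1}\eins)$ with $D = \mathrm{diag}(a_1,\dots,a_n)$, which yields the formula immediately when all $a_i$ are nonzero and then extends to all integers by the polynomial identity principle; the multilinearity proof is preferable here since it is valid without restriction.
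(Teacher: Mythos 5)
Your proof is correct, but it follows a genuinely different route from the paper. You recognize $G(\iota;a_1,\dots,a_n)$ as the rank-one perturbation $\mathrm{diag}(a_1,\dots,a_n)-\iota\,\eins\eins^{\mathrm t}$ and expand the determinant by multilinearity in the rows, observing that any summand containing two copies of the row $-\iota\,\eins^{\mathrm t}$ vanishes, so only the pure diagonal term $a_1\cdots a_n$ and the $n$ terms with a single all-ones row survive; your sign bookkeeping (the identity permutation is forced) is accurate, and the argument is a one-shot closed-form computation valid over any commutative ring with no invertibility caveat. The paper instead argues by induction on $n$: subtracting the second-to-last row of $G$ from the last row produces a last row supported in only two entries, yielding the recursion
\begin{equation*}
    \det(G) \ = \ a_n \det\bigl(G(\iota;a_1,\dots,a_{n-1})\bigr) + a_{n-1}\det\bigl(G(\iota;a_1,\dots,a_{n-2},0)\bigr),
\end{equation*}
after which the induction hypothesis applied to the two smaller matrices gives the formula. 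Both arguments are elementary and short; yours buys a structural explanation (the formula is an instance of the general $\det(D+uv^{\mathrm t})$ phenomenon, as your concluding remark on the matrix determinant lemma makes explicit) and avoids induction entirely, while the paper's recursion stays entirely within row operations and cofactor expansion and requires no decomposition insight. Your remark on extending the matrix determinant lemma from the invertible case by polynomial identity is also sound, though, as you note, unnecessary given the multilinearity argument.
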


\begin{proof}
We prove the Lemma by induction on $n$. The cases $n=1$ and $n=2$ are verified by direct computation. Let $n \ge 3$. Subtracting the second to last row of $G := G(\iota;a_1,\dots,a_n)$ from the last row, we obtain
\begin{equation*}
    \det(G) \ = \ a_n \det(G') + a_{n-1} \det(G''),
\end{equation*}
where $G' = G(\iota; a_1,\dots,a_{n-1})$ and $G'' = G(\iota; a_1,\dots,a_{n-2},0)$. By the induction hypothesis we have
\begin{equation*}
    \det(G') \ = \ a_1\cdots a_{n-1} - \iota \sum\limits_{i=1}^{n-1} \prod\limits_{j \ne i} a_j,
    \qquad
    \det(G'') \ = \ - \iota a_1\cdots a_{n-2}.
\end{equation*}
\end{proof}

\begin{proof}[Proof of Proposition \ref{prop:ws-ufp}]
We prove (i). The weight system $Q$ is of index $\iota$, so $q_i$ divides $\iota |Q|$. Hence $A(Q)$ consists of positive integers. Summing over the reciprocals of $A(Q)$ shows that it is a uf-partition of $\iota$. Assume $A(Q)$ is not reduced and let $A' = A(Q)_\red$. Then $A'$ is a uf-partition of some $\iota' < \iota$. Thus $q_i \mid \iota'|Q|$ holds for all $i = 0,\dots,d$, contradicting the minimality of $\iota$. So $A(Q)$ is reduced. Item (ii) follows from the fact that $t_A$ is the least common multiple of $a_0,\dots,a_d$.

We prove (iii). Let $Q = (q_0,\dots,q_d)$ a weight system of length $d$ and index $\iota$ and write $ A(Q) = (a_0,\dots,a_d)$. The matrix $G = G(\iota;a_0,\dots,a_d)$ as defined in Lemma \ref{lemma:detG} has rank $d$. Both $Q$ and $Q(A(Q))$ are contained in the kernel of $G$ and the latter weight system is reduced. Thus it suffices to show that $G$ has rank $d$. Its kernel is non-trivial, so it has at most rank $d$. The inequality
\begin{equation*}
    \frac{1}{\iota} \ > \ \frac{1}{a_0} + \dots + \frac{1}{a_{d-1}}
\end{equation*}
yields $\det(G(\iota;a_0,\dots,a_{d-1})) > 0$. Hence the minor of $G$, obtained by deleting the last column and row, does not vanish, which yields $\rk(G) = d$. Now let $A = (a_0,\dots,a_d)$ a uf-partition of $\iota$ of length $d+1$. Write $Q(A) = (q_0,\dots,q_d)$ and let $A(Q) = (a_0', \dots, a_d')$. This is a uf-partition of $\iota_Q$ and we have $a_i' q_i = \iota_Q |Q|$ for all $i=0,\dots,d$. Note that $\iota_Q$ divides $\iota$. Write $\iota = \lambda \iota_Q$. We obtain
\begin{equation*}
    \lambda a_i' \ = \ \frac{\iota}{\iota_Q} \frac{\iota_Q|Q|}{q_i} \ = \ \frac{\iota \, \iota_Q t_{A(Q)} a_i}{\iota\,\iota_Q t_{A(Q)}} \ = \ a_i.
\end{equation*}
Hence $A(Q) = \lambda A'$ holds. As $A'$ is reduced, this yields $A(Q)_\red = A'$. For the last assertion in (iii) let $Q = (q_0,\dots,q_d)$ a reduced weight system of length $d$ and write $A := A(Q) = (a_0,\dots,a_d)$. We have $q_i = t_{A(Q)}/a_i$. The well-formedness of $Q$ is equivalent to saying that 
\begin{equation*}
    \prod\limits_{j \ne i} a_j \ = \ t_{A(Q)} \gcd\left( \prod\limits_{k \ne i, j} a_k; \, j \ne i \right)
\end{equation*}
holds for all $i = 0,\dots,d$. This in turn is equivalent to the well-formedness of $A(Q)$.
\end{proof}

\begin{corollary}\label{cor:ws-ufp}
For any $d$-dimensional lattice simplex $P\subseteq N_\RR$ we have $A(P)_\red = A(Q_P)$ and $\iota_P |Q_P| = \lambda_P t_{A(P)}$.
\end{corollary}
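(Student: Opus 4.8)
The plan is to prove the two identities separately, in each case comparing $A(P)$ with the reduced uf-partition $A(Q_P)$ furnished by Proposition~\ref{prop:ws-ufp}. I would start with the volume-type identity $\iota_P|Q_P| = \lambda_P\, t_{A(P)}$, which is a direct computation. Write $Q_P = (q_0,\dots,q_d)$ and set $N := \iota_P|Q_P|$. By Proposition~\ref{prop:P-uf} each $q_i$ divides $N$, and $A(P) = (N/q_0,\dots,N/q_d)$. The key step is the $\lcm$--$\gcd$ duality
\[
t_{A(P)} \ = \ \lcm(N/q_0,\dots,N/q_d) \ = \ \frac{N}{\gcd(q_0,\dots,q_d)},
\]
which I would verify prime by prime: for a prime $p$ the $p$-adic valuation $v_p$ of the left-hand side is $v_p(N) - \min_i v_p(q_i) = v_p(N) - v_p(\gcd(q_0,\dots,q_d))$. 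Since $\gcd(q_0,\dots,q_d) = \lambda_{Q_P} = \lambda_P$ by \cite[Lemma~2.4]{Co}, this rearranges at once to $\lambda_P\, t_{A(P)} = N = \iota_P|Q_P|$.

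For the identity $A(P)_\red = A(Q_P)$ I would exploit that the two tuples are proportional. Denoting by $\iota_{Q_P}$ the index of the weight system $Q_P$, one has $A(Q_P) = (\iota_{Q_P}|Q_P|/q_0,\dots,\iota_{Q_P}|Q_P|/q_d)$, so $A(P) = \mu\, A(Q_P)$ componentwise with $\mu = \iota_P/\iota_{Q_P}$. The one point needing an argument is that $\mu$ is a positive integer: by Proposition~\ref{prop:P-uf} we have $q_i \mid \iota_P|Q_P|$ for all $i$, and since the positive integers $k$ with $q_i \mid k|Q_P|$ for all $i$ are exactly the multiples of $\iota_{Q_P}$, this forces $\iota_{Q_P} \mid \iota_P$. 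By Proposition~\ref{prop:ws-ufp}(i) the uf-partition $A(Q_P)$ is reduced, i.e. its factor $\gcd(\iota_{Q_P}, A(Q_P)_0,\dots,A(Q_P)_d)$ equals $1$. Hence the factor of $A(P) = \mu\, A(Q_P)$ is $\lambda_{A(P)} = \gcd(\mu\iota_{Q_P}, \mu A(Q_P)_0,\dots,\mu A(Q_P)_d) = \mu$, so that $A(P)_\red = A(P)/\mu = A(Q_P)$.

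The only non-formal point is securing the integrality of $\mu$ through the divisibility $\iota_{Q_P} \mid \iota_P$; the remaining manipulations are elementary $\lcm$--$\gcd$ bookkeeping. As a consistency check, combining the two identities gives $Q(A(P))_i = t_{A(P)}/A(P)_i = q_i/\lambda_P$, so $Q(A(P)) = (Q_P)_\red$, in agreement with the correspondence of Proposition~\ref{prop:ws-ufp}(iii).
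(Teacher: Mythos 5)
Your proof is correct, and its overall shape matches the paper's, but your derivation of the second identity is genuinely more self-contained than the paper's. For $A(P)_\red = A(Q_P)$ you do exactly what the paper compresses into ``follows from the definitions and Proposition~\ref{prop:ws-ufp}(iii)'': the proportionality $A(P) = \mu\,A(Q_P)$ with $\mu = \iota_P/\iota_{Q_P}$, the integrality of $\mu$, and the computation $\lambda_{A(P)} = \mu$ against the reducedness of $A(Q_P)$ --- this is the same mechanism that appears inside the paper's proof of Proposition~\ref{prop:ws-ufp}(iii), and you even supply a justification (that the admissible $k$ are precisely the multiples of $\iota_{Q_P}$) which the paper leaves implicit. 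For $\iota_P|Q_P| = \lambda_P\,t_{A(P)}$ the routes diverge: the paper reads the identity off the established correspondence via $|(Q_P)_\red| = t_{A(Q_P)}/\iota_{Q_P} = t_{A(P)}/\iota_P$, i.e.\ from the total weight formula $|Q(A)| = t_A/\iota$ combined with $|Q_P| = \lambda_P\,|(Q_P)_\red|$, whereas you prove it from scratch through the valuation identity $\lcm(N/q_0,\dots,N/q_d) = N/\gcd(q_0,\dots,q_d)$ for $N = \iota_P|Q_P|$ and then invoke $\gcd(Q_P) = \lambda_{Q_P} = \lambda_P$. Both arguments ultimately rest on Conrads' lemma $\lambda_P = \lambda_{Q_P}$ (the paper uses it tacitly when replacing $\lambda_{Q_P}$ by $\lambda_P$); your version makes that dependence explicit and does not need Proposition~\ref{prop:ws-ufp}(iii) at all for this half, at the modest cost of a prime-by-prime computation that the paper's bookkeeping avoids.
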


\begin{proof}
The first assertion follows from the definitions of $A(P),\, A(Q_P)$ and Proposition \ref{prop:ws-ufp} (iii). For the second assertion note that $|(Q_P)_\red| = t_{A(Q_P)}/\iota_{Q_P} = t_{A(P)}/\iota_P$ holds.
\end{proof}

\begin{proof}[Proof of Proposition \ref{prop:uf-vol}]
For any weight system $Q = (q_0,\dots, q_d)$ of length $d$ we define (cf. \cite{Co,Ni})
\begin{equation*}
    m_Q \ := \ \frac{|Q|^{d-1}}{q_0 \cdots q_d} \ \in \ \QQ_{>0}.
\end{equation*}
By \cite[Proposition~3.6]{Ni} the weight systems $Q_P$ and $Q_{\iota_P P^*}$ are related by $Q_{\iota_P P^*} = \iota_P^d m_{Q_P} Q_P$. Note that for the normalized volume of $P \subseteq N_\RR$ we have $\Vol(\iota_P P^*) = |Q_{\iota_P P^*}|$. Moreover by Corollary \ref{cor:ws-ufp} we have $\iota_P |Q_P| = \lambda_P t_{A(P)}$. We obtain
\begin{equation*}
    \lambda_P \Vol(\iota_P P^*) \ = \ \lambda_P \iota_P^d m_{Q_P} |Q_P| \ = \ \lambda_P \iota_P^d \frac{|Q_P|^d}{q_0 \cdots q_d} \ = \ \frac{a_0 \cdots a_d}{\lcm(a_0,\dots,a_d)}.
\end{equation*}
\end{proof}

\section{Sharp bounds for uf-partitions}\label{section:syl-seq}

Proposition \ref{prop:uf_ineq} gives an upper bound on the expression obtained in Proposition~\ref{prop:uf-vol}. This constitutes the number theoretic part of the proof of Theorem~\ref{thm:main}. The Lemmas thereafter are preparation for the proof of Proposition \ref{prop:uf_ineq}.

\begin{definition}\label{def:sylseq}
For any $\iota \in \ZZ_{\ge 1}$ we define a sequence $S_\iota = (s_{\iota,1}, s_{\iota,2}, \dots)$ of positive integers by
\begin{equation*}
    s_{\iota,1} \ := \ \iota+1, \qquad s_{\iota,k+1} \ := \ s_{\iota,k} (s_{\iota,k} - 1) + 1.
\end{equation*}
Moreover, for any $k \in \ZZ_{\ge 1}$ we set $t_{\iota,k} := s_{\iota,k} - 1$. We denote by $\syl_{\iota,n}$ the uf-partition of $\iota$ of length $n$ given by
\begin{equation*}
    \syl_{\iota,n} \ := \ (s_{\iota,1}, \dots, s_{\iota,n-2}, 2\, t_{\iota,n-1}, 2\, t_{\iota,n-1}).
\end{equation*}
Following the naming convention in \cite{Ni} we call $\syl_{\iota,n}$ the \emph{enlarged sylvester partition} (\emph{of $\iota$ of length $n$}).
\end{definition}

\begin{proposition}\label{prop:uf_ineq}
Let $\iota \in \ZZ_{\ge 1}$ and $n \ge 3$. Assume $(\iota,n) \ne (1,3)$. For any uf-partition $A = (a_1,\dots,a_n)$ of $\iota$ with $a_1 \le \dots \le a_n$ we have
\begin{equation*}
    \frac{a_1 \cdots a_n}{\lcm(a_1,\dots,a_n)} \ \le \ a_1 \cdots a_{n-1} \ \le \ \frac{2\, t_{\iota,n-1}^2}{\iota}.
\end{equation*}
Equality in the second case holds if and only if one of the following holds:
\begin{enumerate}
    \item[-] $(\iota,n)=(2,3)$ and $A = (6,6,6)$.
    \item[-] $(\iota,n)=(1,4)$ and $A = (2,6,6,6)$.
    \item[-] $A$ is the enlarged sylvester partition $\syl_{\iota,n}$.
\end{enumerate}
\end{proposition}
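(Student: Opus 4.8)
The plan is to dispose of the first inequality at once and then concentrate on the second. Since $a_n$ divides $\lcm(a_1,\dots,a_n)$, we have $\lcm(a_1,\dots,a_n)\ge a_n$, whence $\frac{a_1\cdots a_n}{\lcm(a_1,\dots,a_n)}\le \frac{a_1\cdots a_n}{a_n}=a_1\cdots a_{n-1}$; this needs no hypothesis on $(\iota,n)$. For the substantive inequality $a_1\cdots a_{n-1}\le 2t_{\iota,n-1}^2/\iota$ I would first record the arithmetic of the defining sequence: from $t_{\iota,k+1}=s_{\iota,k}\,t_{\iota,k}$ one gets the telescoping identity $\prod_{k=1}^{n-2}s_{\iota,k}=t_{\iota,n-1}/\iota$ (using $t_{\iota,1}=\iota$) together with the shift $s_{\iota(\iota+1),k}=s_{\iota,k+1}$, hence $t_{\iota(\iota+1),k}=t_{\iota,k+1}$. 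The telescoping identity immediately gives $a_1\cdots a_{n-1}=2t_{\iota,n-1}^2/\iota$ for $A=\syl_{\iota,n}$, settling the \emph{if} direction of the equality statement and exhibiting the generic extremizer.

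The upper bound I would prove by induction on $n$, with the statement quantified over all admissible $\iota$. For the base case $n=3$ (so $\iota\ge 2$), from $a_2\le a_3$ and $\frac1{a_2}+\frac1{a_3}=\frac1\iota-\frac1{a_1}$ I get $a_2\le \frac{2\iota a_1}{a_1-\iota}$, hence $a_1a_2\le f(a_1):=\frac{2\iota a_1^2}{a_1-\iota}$ on the range $\iota+1\le a_1\le 3\iota$ (the upper bound $a_1\le 3\iota$ coming from $\frac{3}{a_1}\ge\frac1\iota$). The function $f$ is decreasing then increasing with minimum at $a_1=2\iota$, so its maximum on the range is attained at an endpoint: $f(\iota+1)=2\iota(\iota+1)^2=2t_{\iota,2}^2/\iota$ and $f(3\iota)=9\iota^2$, and $2\iota(\iota+1)^2\ge 9\iota^2$ for $\iota\ge 2$ with equality exactly at $\iota=2$. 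Tracing when $a_1a_2=f(a_1)$, which forces $a_2=a_3$, then yields precisely the extremizers $\syl_{\iota,3}$ (from $a_1=\iota+1$) and, only for $\iota=2$, the sporadic $(6,6,6)$ (from $a_1=3\iota=6$).

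For the inductive step ($n\ge 4$) I would split on the smallest part. If $a_1=\iota+1$, then $\frac1{a_2}+\dots+\frac1{a_n}=\frac1\iota-\frac1{\iota+1}=\frac1{\iota(\iota+1)}$, so $(a_2,\dots,a_n)$ is an ordered uf-partition of $\iota':=\iota(\iota+1)\ge 2$ of length $n-1$, with $(\iota',n-1)\ne(1,3)$. The inductive bound together with the shift $t_{\iota',n-2}=t_{\iota,n-1}$ gives $a_2\cdots a_{n-1}\le 2t_{\iota,n-1}^2/(\iota(\iota+1))$, and multiplying by $a_1=\iota+1$ returns exactly $2t_{\iota,n-1}^2/\iota$. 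Here equality forces equality for the sub-partition, and unwinding the inductive equality cases produces $\syl_{\iota,n}$ and, precisely when $\iota'=2$ and $n-1=3$, i.e. $(\iota,n)=(1,4)$, the sporadic $(2,6,6,6)$. Thus both exceptional equality cases arise automatically, the first already in the base case and the second through this reduction.

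The remaining case $a_1\ge \iota+2$ is where the real work lies, and here I must show the inequality is \emph{strict}, so that no new extremizers appear. The natural handle is $\sum_{k=1}^{n-2}\frac1{a_k}+\frac2{a_{n-1}}\ge\frac1\iota$, obtained from $a_n\ge a_{n-1}$, which pins the extremal configuration to $a_n=a_{n-1}$ and confines the smallest part to $\iota+2\le a_1\le n\iota$. The hard part is that the corresponding continuous relaxation is genuinely not tight: allowing real entries one can already exceed $2t_{\iota,n-1}^2/\iota$ (for instance in the case $\iota=1$, $n=4$), so the argument must exploit integrality rather than convexity. The route I would pursue is to peel off $a_1$ and bound $a_2\cdots a_{n-1}$ by a greedy Sylvester-type estimate for unit-fraction partitions of the \emph{rational} remainder $\frac1\iota-\frac1{a_1}$, and then check that $a_1\cdot(\text{that estimate})<2t_{\iota,n-1}^2/\iota$ for every $a_1\ge\iota+2$; monotonicity in $a_1$ should reduce this to the boundary value $a_1=\iota+2$, with the finitely many genuinely small cases (above all $n=4$) verified directly. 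Establishing the general-remainder estimate, and deducing from it the strict comparison that excludes further equality cases, is the principal obstacle of the whole proposition.
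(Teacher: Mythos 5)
Your treatment of the first inequality, the equality computation for $\syl_{\iota,n}$, the base case $n=3$ with $\iota\ge 2$, and the reduction for $a_1=\iota+1$ via the shift $s_{\iota(\iota+1),k}=s_{\iota,k+1}$ are all correct; the last of these is a genuinely nice observation, since it makes both sporadic equality cases, $(6,6,6)$ for $(\iota,n)=(2,3)$ and $(2,6,6,6)$ for $(\iota,n)=(1,4)$, fall out of a single mechanism. But the proposal does not prove the proposition: the entire case $a_1\ge\iota+2$, $n\ge 4$ is left as a plan, as you acknowledge yourself (``the principal obstacle of the whole proposition''). The ``greedy Sylvester-type estimate for the rational remainder'' is never formulated, let alone proved, and it is not an off-the-shelf fact: controlling the maximal product of unit-fraction partitions of a general rational target $\tfrac1\iota-\tfrac1{a_1}$ is essentially as hard as the proposition itself, so the plan is close to circular. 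As it stands, nothing in your argument excludes a partition with $a_1=\iota+2$ whose product beats the bound, nor does it deliver the strictness needed to keep the equality classification intact.

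The idea you are missing --- and the way the paper closes exactly this case --- is that integrality needs to enter only \emph{once}. For a uf-partition of $\iota$ and any $1\le k\le n-1$, the integer $a_1\cdots a_k\bigl(1-\iota(\tfrac1{a_1}+\dots+\tfrac1{a_k})\bigr)$ is positive, hence at least $1$; in the coordinates $x_i=1/a_i$ this becomes the family of constraints $x_1\cdots x_k\le\iota(x_{k+1}+\dots+x_n)$, which is condition (A3) in Lemma \ref{lemma:ufp_in_A}. The paper then minimizes the continuous function $x_1\cdots x_{n-1}$ over the compact set $A_\iota^n$ cut out by the ordering, the sum condition and (A3): perturbation arguments (Lemma \ref{lemma:yn-1=yn}) force a minimizer to satisfy $y_{n-1}=y_n$ and $y_k=1/s_{\iota,k}$ for small $k$ with the constraints (A3) active, which reduces everything to a one-variable optimization and to the purely numerical inequality on the sylvester numbers in Lemma \ref{lemma:t_n-ineq}; this handles all values of $a_1$ uniformly, including the strict inequality you need when $a_1\ge\iota+2$. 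Your diagnosis that the naive relaxation (ordering plus $\sum x_i=1/\iota$ alone) is untight is correct --- its infimum is $0$ --- but the conclusion you drew from it, that the argument must exploit integrality throughout rather than a continuous optimization, is precisely what blocked you: once (A3) is imposed, the continuous problem \emph{is} tight, and that is the content of Lemma \ref{lemma:izku}. If you want to salvage your induction, the cleanest repair is to replace the unproved remainder estimate by this constrained-minimization lemma, at which point your $a_1=\iota+1$ reduction becomes an attractive but optional shortcut rather than a load-bearing step.
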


This Proposition is a generalization of \cite[Thm.~5.1.3]{Ni}. There Nill utilizes and expands the techniques of Izhboldin and Kurliandchik presented in \cite{IzKu}. Here we modify Nill's arguments to incorporate the cases for $\iota \ge 2$. Let $\iota, n \in \ZZ_{\ge 1}$. We denote by $A_\iota^n \subseteq \RR^n$ the compact set of all tuples $x \in \RR^n$ with
\begin{enumerate}
    \item[(A1)] $x_1 \ge \dots \ge x_n \ge 0$,
    \item[(A2)] $x_1 + \dots + x_n = 1/\iota$,
    \item[(A3)] $x_1\cdots x_k \le \iota (x_{k+1} + \dots + x_n)$ for all $k = 1, \dots, n-1$.
\end{enumerate}

\begin{lemma}\label{lemma:ufp_in_A}
For any uf-partition $A = (a_1,\dots,a_n)$ of $\iota$ with $a_1 \le \dots \le a_n$ the tuple $(1/a_1,\dots,1/a_n)$ is contained in $A_{\iota}^n$.
\end{lemma}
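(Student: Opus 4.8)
The plan is to verify the three defining conditions (A1)--(A3) of $A_\iota^n$ in turn for the tuple $x = (1/a_1,\dots,1/a_n)$, with essentially all of the work concentrated in (A3). Conditions (A1) and (A2) I expect to dispose of immediately. Since $a_1 \le \dots \le a_n$ are positive integers, their reciprocals satisfy $1/a_1 \ge \dots \ge 1/a_n > 0$, which is exactly (A1); and (A2), namely $\sum_{i=1}^n 1/a_i = 1/\iota$, is precisely the defining relation of a uf-partition of $\iota$, so it holds by hypothesis.

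The substance is (A3): for each $k \in \{1,\dots,n-1\}$ one must show $\prod_{i=1}^k (1/a_i) \le \iota \sum_{j=k+1}^n (1/a_j)$. My approach is to clear denominators. Multiplying the claimed inequality by the positive integer $a_1 \cdots a_k$ turns the left-hand side into $1$, so it suffices to prove
\[
a_1 \cdots a_k \cdot \iota \sum_{j=k+1}^n \frac{1}{a_j} \ \ge \ 1 .
\]
Here I would invoke the partition relation (A2) to rewrite the tail as $\sum_{j=k+1}^n 1/a_j = 1/\iota - \sum_{i=1}^k 1/a_i$, which transforms the left side into
\[
a_1 \cdots a_k - \iota \sum_{i=1}^k \prod_{\substack{1 \le l \le k \\ l \ne i}} a_l .
\]
This is a combination of the integers $\iota$ and $a_1,\dots,a_k$, hence an integer.

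Finally I would observe that this integer is strictly positive: since $k \le n-1$ the tail sum $\sum_{j=k+1}^n 1/a_j$ contains at least one positive reciprocal, so it is positive, and multiplying by the positive quantity $\iota\, a_1 \cdots a_k$ preserves positivity. A positive integer is at least $1$, which is exactly the displayed inequality and hence gives (A3). There is no serious obstacle here; the only point requiring care — and the closest thing to one — is recognizing that clearing denominators converts the analytic inequality (A3) into the conjunction of an \emph{integrality} statement (which relies on the uf-partition relation) and a \emph{positivity} statement (which relies on $k \le n-1$, so that the tail is nonempty). Once this is seen, the argument reduces to the one-line computation above.
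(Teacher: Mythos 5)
Your proposal is correct and is essentially identical to the paper's proof: the paper also verifies (A1) and (A2) immediately and, for (A3), uses the partition relation to rewrite $\iota(1/a_{k+1}+\dots+1/a_n)$ as the fraction $\bigl(a_1\cdots a_k - \iota\sum_{j=1}^k \prod_{i\ne j} a_i\bigr)/(a_1\cdots a_k)$, whose numerator is a positive integer and hence at least one. Your split into an integrality statement plus a positivity statement (the latter from $k \le n-1$) is exactly the mechanism of the paper's argument, just narrated more explicitly.
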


\begin{proof}
The tuple $(1/a_1,\dots,1/a_n)$ fulfills conditions (A1) and (A2). For the third condition let $1 \le k \le n-1$. Then we have
\begin{equation*}
    \iota \left( \frac{1}{a_{k+1}} + \dots + \frac{1}{a_{n}} \right) \ = \ 1 - \iota \left( \frac{1}{a_1} + \dots + \frac{1}{a_k} \right)
    \ = \ \frac{a_1 \cdots a_k - \iota\left(\sum_{j=1}^k \prod_{i \ne j} a_i\right)}{a_1 \cdots a_k}. 
\end{equation*}
The numerator on the right hand side is a positive integer. In particular, it is at least one.
\end{proof}

The main part of the proof of Proposition \ref{prop:uf_ineq} is incorporated in the following Lemma, which extends \cite[Lemma~5.6]{Ni}.

\begin{lemma}\label{lemma:izku}
Let $n \ge 3$, $\iota \in \ZZ_{\ge 1}$ and let $x \in A_\iota^n$. Then, except for the case $(\iota,n) = (1,3)$, we have
\begin{equation*}
    x_1\cdots x_{n-1} \ge \frac{\iota}{2\,t_{\iota,n-1}^2}.
\end{equation*}
Equality holds if and only if one of the following holds:
\begin{enumerate}
    \item[-] $(\iota,n) = (2,3)$ and $(x_1,x_2,x_3) = (1/6,1/6,1/6)$.
    \item[-] $(\iota,n) = (1,4)$ and $(x_1,x_2,x_3,x_4) = (1/2,1/6,1/6,1/6)$.
    \item[-] $(1/x_1,\dots,1/x_n)$ is the enlarged sylvester partition $\syl_{\iota,n}$.
\end{enumerate}
\end{lemma}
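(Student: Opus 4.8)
The plan is to induct on $n$, settling the base case $n=3$ by elementary calculus and reducing the step $n\ge 4$ to length $n-1$ with the larger index $t_{\iota,2}=\iota(\iota+1)$. Throughout I minimise the continuous function $f(x):=x_1\cdots x_{n-1}$ over the compact set $A_\iota^n$, so a minimiser exists. First I record that every $x\in A_\iota^n$ has strictly positive entries: if $x_n=0$, then (A3) with $k=n-1$ forces $x_{n-1}=0$, and descending through (A3) for $k=n-2,\dots,1$ forces $x_1=\dots=x_n=0$, contradicting (A2). Hence $f>0$ on $A_\iota^n$, and it suffices to locate the minimisers and evaluate $f$ there.

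The engine is an \emph{exchange principle}: for $1\le i<j\le n-1$ and small $\varepsilon>0$, replacing $(x_i,x_j)$ by $(x_i+\varepsilon,\,x_j-\varepsilon)$ preserves (A2) and weakly lowers $f$, strictly when $x_i>x_j$, since for fixed sum the product of two positive reals drops as they move apart; I also use the companion move lowering $x_{n-1}$ and raising $x_n$ (both keeping feasibility, as $x_n$ does not occur in $f$ and (A3) with $k=n-1$ only relaxes), and analogous moves on whole equal-coordinate blocks. The companion move shows at once that a minimiser satisfies $x_{n-1}=x_n$. The decisive structural claim is that for $n\ge 4$ the constraint (A3) with $k=1$ is tight at a minimiser, i.e.\ $x_1=1/(\iota+1)=1/s_{\iota,1}$. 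One argues by contradiction: if it is slack one raises $x_1$ and lowers a suitable later coordinate; the only obstructions to such an improving move are a \emph{tight} constraint (A3) with index $2\le k<j$ or an equality $x_j=x_{j+1}$ coming from (A1), and bookkeeping these across the blocks of $x$ shows that a blocked minimiser with (A3,$1$) slack must be forced into the fully degenerate point $x_1=\dots=x_n$. That point has value $(n\iota)^{-(n-1)}$, which for $n\ge 4$ strictly exceeds $\iota/(2\,t_{\iota,n-1}^2)$ (the left side decays only exponentially while $t_{\iota,n-1}$ grows doubly exponentially), so it is not a minimiser; hence (A3,$1$) is tight.

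Granting tightness of (A3,$1$), the reduction is clean. Writing $y_i:=x_{i+1}$ and dividing the constraints (A3) with $k\ge 2$ by $x_1=1/(\iota+1)$, using $\iota(\iota+1)=t_{\iota,2}$, one checks that $(y_1,\dots,y_{n-1})\in A^{n-1}_{t_{\iota,2}}$ (the ordering $x_1\ge y_1$ being automatic, since (A3,$1$) for the reduced system gives $y_1\le 1/s_{\iota,2}<1/s_{\iota,1}$) and that $f=\tfrac{1}{s_{\iota,1}}\,y_1\cdots y_{n-2}$. As the reduced index $t_{\iota,2}\ge 2$ the excluded pair $(1,3)$ is never produced, so by induction $y_1\cdots y_{n-2}\ge t_{\iota,2}/(2\,t_{t_{\iota,2},\,n-2}^2)$. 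The Sylvester recursion yields $s_{t_{\iota,2},\,j}=s_{\iota,j+1}$, hence $t_{t_{\iota,2},\,n-2}=t_{\iota,n-1}$, and with $t_{\iota,2}=s_{\iota,1}\,t_{\iota,1}=\iota\,s_{\iota,1}$ this gives exactly $f\ge \iota/(2\,t_{\iota,n-1}^2)$. For the equality cases, a minimising tail is either the length-$(n-1)$ enlarged Sylvester partition (glued with $x_1=1/s_{\iota,1}$ this is precisely $\syl_{\iota,n}$) or a length-$(n-1)$ exception; the only exception that can arise is $(t_{\iota,2},n-1)=(2,3)$, i.e.\ $(\iota,n)=(1,4)$, which reproduces $(2,6,6,6)$.

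For the base case $n=3$ I set $x_2=x_3=:c$ (forced by $x_{n-1}=x_n$), so $x_1=1/\iota-2c$ and $f=(1/\iota-2c)c$ is a downward parabola in $c$ on the interval cut out by (A1) and (A3,$1$), namely $c\in[\,1/(2\iota(\iota+1)),\,1/(3\iota)\,]$, with (A3) for $k=2$ slack throughout. The minimum thus lies at an endpoint: the left endpoint is $\syl_{\iota,3}$ with value $1/(2\iota(\iota+1)^2)=\iota/(2\,t_{\iota,2}^2)$, the right endpoint is the all-equal point with value $1/(9\iota^2)$, and comparing them reduces to the sign of $2\iota^2-5\iota+2=2(\iota-2)(\iota-\tfrac12)$. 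For $\iota\ge 3$ the Sylvester endpoint is the unique minimiser; for $\iota=2$ both tie, producing the exceptional $(6,6,6)$ next to $\syl_{2,3}=(3,12,12)$; for $\iota=1$ the all-equal value $1/9$ falls below $1/8$, which is exactly why $(1,3)$ is excluded. I expect the main obstacle to be the structural claim in the inductive step that (A3,$1$) is tight for $n\ge 4$: the improving exchanges must be tracked carefully across coincident-coordinate blocks and the intermediate tight constraints, so that the only configuration blocking every improvement is the degenerate one, which is then eliminated for $n\ge 4$ by the product comparison above while being recognised as the genuine second minimiser precisely at the base case $(\iota,n)=(2,3)$.
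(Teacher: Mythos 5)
Your overall architecture is genuinely different from the paper's and is, in outline, viable: you induct on $n$ by identifying the locus of $A_\iota^n$ where (A3) with $k=1$ is tight with $\{1/s_{\iota,1}\}\times A_{\iota'}^{n-1}$ for the escalated index $\iota' = t_{\iota,2} = \iota(\iota+1)$, and you exploit the shift $s_{\iota',j} = s_{\iota,j+1}$, hence $t_{\iota',n-2} = t_{\iota,n-1}$, to recover the stated bound. That correspondence is correct in both directions (your check $y_1 \le 1/s_{\iota,2} < 1/s_{\iota,1}$ settles the gluing), your base case $n=3$ is complete and matches the equality classification (the tie at $\iota=2$ giving $(6,6,6)$, the failure at $\iota=1$ explaining the excluded pair), and your equality bookkeeping through the induction is right: the reduced exception $(\iota',n-1)=(2,3)$ occurs exactly at $(\iota,n)=(1,4)$ and reproduces $(2,6,6,6)$, while $(\iota',n-1)=(1,3)$ and $(1,4)$ cannot occur since $\iota' \ge 2$. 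The paper instead keeps $\iota$ fixed throughout, classifies a minimiser by the least index $i_0$ with $y_{i_0}=y_n$ (its Lemma \ref{lemma:yn-1=yn} forces $y_k = 1/s_{\iota,k}$ for $k \le i_0-2$ via tight constraints), and closes each stratum by one-variable calculus against the two-parameter numerical Lemma \ref{lemma:t_n-ineq}; your route would replace that lemma by a single instance of it plus the Sylvester shift, which is a real simplification if it goes through.

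However, the step you yourself flag as the main obstacle is a genuine gap, and your bookkeeping claim is false as stated: a minimiser with (A3,$1$) slack is \emph{not} forced into the fully degenerate point by blocked exchanges. The configuration $x_1 > x_2 = \dots = x_n$ defeats every move on your list. Raising $x_1$ against the last tail coordinate means lowering $x_n$, which is absent from $f$, so $f$ increases; raising $x_1$ against the whole tail block changes $f$ to first order by $f\,\varepsilon\bigl((n-1)/x_1 - (n-2)/x_2\bigr)$, whose sign is uncontrolled; and no constraint (A3,$k$) with $k \ge 2$ can be tight there, since tightness would force $x_1 x_2^{k-2} = \iota(n-k) \ge 1$, impossible under (A2) — so your "tight intermediate constraints" never arise on this stratum. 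The blocked point here is the interior critical point $c^* $ of $f(c) = (1/\iota - (n-1)c)\,c^{n-2}$ along the family $x_1 = 1/\iota - (n-1)c$, which is neither degenerate nor cut out by any tight constraint; it is excluded only by a second-order argument, e.g. $f''(c^*) = -(n-2)c^{*\,n-4}/\iota < 0$, so it is a strict local maximum and a minimiser on this stratum must sit at an endpoint: (A3,$1$) tight, or the all-equal point. Only after this does your comparison $(n\iota)^{-(n-1)} > \iota/(2\,t_{\iota,n-1}^2)$ for $n \ge 4$ finish the claim — and that comparison also needs an actual proof rather than a growth heuristic; it is precisely the $r = n-1$ instance of the paper's Lemma \ref{lemma:t_n-ineq} applied with $n-1$ in place of $n$, strict because the equality cases there require $r \le 2$. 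With these two points supplied (and a Lemma \ref{lemma:yn-1=yn}-style block argument for the strata with longer strictly decreasing initial segments, which your exchange moves do handle), your induction becomes a complete and arguably cleaner alternative to the paper's stratified minimisation.
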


We will need the following result, which is an extension of \cite[Lemma~5.4]{Ni} to higher indices.

\begin{lemma}\label{lemma:t_n-ineq}
Let $\iota \in \ZZ_{\ge 1}$, $n \in \ZZ_{\ge 1}$ and $1 \le r \le n$. Then, except for the case $(\iota,n,r) = (1,2,2)$, we have
\begin{equation*}
    (r+1)^r\, t_{\iota,n-r+1}^{r+1} \ \le \ 2\, t_{\iota,n}^2.
\end{equation*}
Equality holds if and only if either $r = 1$ or $(\iota,n,r) = (1,3,2)$ or $(\iota,n,r) = (2,2,2)$.
\end{lemma}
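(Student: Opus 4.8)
The plan is to fix $\iota$, abbreviate $t_k := t_{\iota,k}$, and first record the recursion that drives everything: since $t_{\iota,k} = s_{\iota,k}-1$ and $s_{\iota,k+1} = s_{\iota,k}(s_{\iota,k}-1)+1$, we get $t_1 = \iota$ and $t_{k+1} = t_k(t_k+1)$. I would then reformulate the claim as the family of inequalities $P(n,r)\colon (r+1)^r t_{n-r+1}^{r+1} \le 2\,t_n^2$ and prove it by induction on $r$, with base case $r=1$, where both sides equal $2\,t_n^2$, already giving the equality asserted for $r=1$.

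For the inductive step I would compare the left-hand sides of $P(n,r)$ and $P(n,r+1)$. Using $t_{n-r+1} = t_{n-r}(t_{n-r}+1)$, the quotient of the $(r{+}1)$-st left-hand side by the $r$-th equals $(r+2)^{r+1} t_{n-r}\big/\bigl((r+1)^r (t_{n-r}+1)^{r+1}\bigr)$, so the left-hand side does not increase precisely when $f(r,t_{n-r}) \ge 0$, where I set $f(r,t) := (r+1)^r (t+1)^{r+1} - (r+2)^{r+1} t$. Everything therefore reduces to determining the sign of $f(r,t)$ for integers $r\ge 1$, $t\ge 1$.

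The heart of the argument is this sign analysis. For $r=1$ a direct factorisation gives $f(1,t) = (2t-1)(t-2)$, so $f(1,1) = -1 < 0$, $f(1,2)=0$, and $f(1,t)>0$ for $t\ge 3$. For $r\ge 2$ I would show $f(r,t)>0$ for all $t\ge 1$ in two steps: first that $t \mapsto f(r,t)$ is increasing on $[1,\infty)$, since its derivative (equivalently its forward difference) is bounded below by $(r+1)^{r+1}2^r - (r+2)^{r+1}$, which is positive because $(1+\tfrac{1}{r+1})^{r+1} < e < 2^r$ for $r\ge 2$; and second that the minimum value $f(r,1) = (r+1)^r 2^{r+1} - (r+2)^{r+1}$ is positive, which I would deduce from the strict Bernoulli estimate $(1-\tfrac{1}{r+2})^r > \tfrac{2}{r+2}$ combined with $2^{r+2}\ge (r+2)^2$ for $r\ge 2$. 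Thus the only pair with $f(r,t)<0$ is $(r,t)=(1,1)$, while $f(r,t)=0$ happens only for $(r,t)=(1,2)$.

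Finally I would assemble the induction and read off the equality cases. A step has quotient $>1$ only at $(r,t_{n-r})=(1,1)$, i.e. $r=1$ and $n-r=1$ with $\iota=1$, which is exactly $\iota=1$, $n=2$ in the step $r=1\to 2$; this is the excluded triple $(1,2,2)$, and for every other $(\iota,n,r)$ the inequality $P(n,r)$ follows. For equality I would use that $P(n,1)$ is an equality and that each subsequent step multiplies the left-hand side by a factor $\le 1$ that equals $1$ only when $r=1$ and $t_{n-1}=2$; hence equality in $P(n,r)$ forces either $r=1$, or $r=2$ with $t_{n-1}=2$. Since $(t_k)$ is strictly increasing with $t_1=\iota$ and $t_2 = \iota(\iota+1)$, the condition $t_{n-1}=2$ holds exactly for $(\iota,n)=(2,2)$ and $(\iota,n)=(1,3)$, recovering precisely the cases $(2,2,2)$ and $(1,3,2)$. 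The main obstacle is the strict positivity of $f(r,t)$ for $r\ge 2$: the margin is tight near $r=2$, $t=1$, so the crude estimate $(1+\tfrac1{r+1})^{r+1}<e$ alone does not suffice and one must couple the Bernoulli bound with $2^{r+2}\ge(r+2)^2$ to close the gap.
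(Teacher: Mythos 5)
Your proof is correct, and it takes a genuinely different route from the paper's. Both arguments are inductions anchored at the equality case $r=1$, but the paper steps along the diagonal $(\iota,n,r)\to(\iota,n-1,r-1)$, keeping $n-r$ fixed: its step combines the crude bounds $s_{\iota,n-1}>(r+1)^2/r$ and $s_{\iota,k}>(r+1)/r$ with $t_{\iota,n-1}^2<t_{\iota,n}$ and the telescoping identity $t_{\iota,n}=t_{\iota,n-r+1}\,s_{\iota,n-r+1}\cdots s_{\iota,n-1}$, and since that chain is strict for $n\ge4$, the exceptional triple and all equality cases are swept into direct computations for $n=2,3$. You instead fix $n$ and telescope in $r$, reducing everything to the sign of $f(r,t)=(r+1)^r(t+1)^{r+1}-(r+2)^{r+1}t$ at $t=t_{\iota,n-r}$; your factorization $f(1,t)=(2t-1)(t-2)$ and the Bernoulli argument for $r\ge2$ check out (the strict bound $(1-\tfrac{1}{r+2})^r>\tfrac{2}{r+2}$ together with $2^{r+2}\ge(r+2)^2$ gives $f(r,1)>0$, e.g. $f(2,1)=8$, and monotonicity in $t$ follows from $(1+\tfrac{1}{r+1})^{r+1}<e<2^r$), so the unique negative value $f(1,1)$ and unique zero $f(1,2)$ uniformly produce the exception $(1,2,2)$ (via $t_{n-1}=1$, i.e. $\iota=1$, $n=2$, where no further step can follow since $r\le n$) and the equality cases $(2,2,2)$, $(1,3,2)$ (via $t_{n-1}=2$, with strictness of every step $j\ge2$ ruling out $r\ge3$). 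In short: your ratio method buys a systematic classification of the boundary cases that the paper handles by ad hoc small-case verification, while the paper's diagonal induction avoids the delicate tight positivity of $f(r,1)$ near $r=2$ that your argument must confront.
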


\begin{proof}
We prove the Lemma by induction on $n$ and $r$. The case $r = 1$ is clear. Let $r \ge 2$. The cases $n = 2$ and $n = 3$ are verified by direct computation. Let $n \ge 4$. Then for any $2 \le r \le n$ we have $s_{\iota,n-1} > (r+1)^2/r$. Furthermore, for any $k\in \ZZ_{\ge 1}$ we have $s_{\iota,k} > (r+1)/r$. Combining these two inequalities, we obtain:
\begin{equation*}
    r \left(\frac{r+1}{r}\right)^r \ <  \ s_{\iota,n-r+1}\cdots s_{\iota,n-1}.
\end{equation*}
Moreover, $t_{\iota,n-1}^2 < t_{\iota,n}$ always holds. Now by the induction hypothesis the statement of the Lemma is true for $(\iota, n-1, r-1)$, ie. $r^{r-1}\, t_{\iota,n-r+1}^{r} \le 2\, t_{\iota,n-1}^2$ holds. Together with the previous considerations, we obtain:
\begin{equation*}
    (r+1)^r\, t_{\iota,n-r+1}^{r+1} \ \le \ 2\, t_{\iota,n-1}^2\, r\, \left( \frac{r+1}{r} \right)^r t_{\iota,n-r+1} \ < \ 2\, t_{\iota,n-1}^2\, t_{\iota,n} \ < \ 2\, t_{\iota,n}^2.
\end{equation*}
\end{proof}

\begin{lemma}\label{lemma:yn-1=yn}
Let $n \ge 3$ and let $y \in A_\iota^n$ minimizing the product $y_1\cdots y_{n-1}$. Denote by $i_0 \in \{1,\dots,n\}$ the least index with $y_{i_0} = y_n$. Then the following hold:
\begin{enumerate}
    \item $i_0 \le n-1$.
    \item For any $1 \le k \le i_0-2$ we have $y_k = 1/s_{\iota,k}$.
\end{enumerate}
\end{lemma}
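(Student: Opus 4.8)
The plan is to pin down the minimizer through local mass-transfer (exchange) arguments and then to convert the resulting tightness of the constraints (A3) into the values $1/s_{\iota,k}$ by an explicit recursion. A preliminary observation I would record first is that every $x \in A_\iota^n$ has all coordinates strictly positive: if $x_{m+1} = \dots = x_n = 0$ with $x_m > 0$ and $m \le n-1$, then (A3) for $k=m$ reads $x_1\cdots x_m \le \iota(x_{m+1}+\dots+x_n)=0$, which is absurd. In particular $y_1\cdots y_{n-1}>0$, so partial products never vanish. For part (i) I argue by contradiction: if $y_{n-1}>y_n$, I transfer a small mass $\delta$ from $y_{n-1}$ to $y_n$. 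This keeps (A2); it keeps (A1) for small $\delta$ since $y_{n-2}\ge y_{n-1}>y_{n-1}-\delta\ge y_n+\delta$; and it keeps (A3), because for $k\le n-2$ both sides are unchanged (the pair $y_{n-1},y_n$ lies entirely in the tail), while for $k=n-1$ the left side $y_1\cdots y_{n-1}$ decreases and the right side $\iota y_n$ increases. Since $y_1\cdots y_{n-2}>0$, the product strictly decreases, contradicting minimality. Hence $y_{n-1}=y_n$ and $i_0\le n-1$.

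For part (ii) I would prove by induction on $k$ the combined statement that (A3) is tight at $k$ and $y_k=1/s_{\iota,k}$, for $1\le k\le i_0-2$. The engine converting tightness into values is that, using (A2), tightness of (A3) at $k$ is equivalent to $y_1\cdots y_k = 1-\iota(y_1+\dots+y_k)$. Writing $p_{k-1}:=y_1\cdots y_{k-1}$ and combining the tight relations at $k-1$ and $k$ gives $y_k = p_{k-1}/(p_{k-1}+\iota)$; since the inductive values yield $p_{k-1}=\iota/t_{\iota,k}$, this forces $y_k = 1/(1+t_{\iota,k}) = 1/s_{\iota,k}$ (using $t_{\iota,k+1}=s_{\iota,k}t_{\iota,k}$), with base case $k=1$ reading $y_1 = 1/(1+\iota)=1/s_{\iota,1}$. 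Moreover, granting tightness at $k-1$, the constraint (A3) at $k$ is equivalent to the single inequality $y_k\le 1/s_{\iota,k}$, which already gives $y_k\le 1/s_{\iota,k}<1/s_{\iota,k-1}=y_{k-1}$; so there is always \emph{room from above} to raise $y_k$.

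It then remains to exclude that (A3) is slack at some $k\le i_0-2$. If it were, I would raise $y_k$ at the expense of the coordinate at the bottom of the block immediately below it: let $y_{k+1}=\dots=y_b=:c$ be that maximal block, with $b\le i_0-1$ and $y_b>y_{b+1}$, and transfer $\delta$ from $y_b$ to $y_k$. Since $b\le i_0-1\le n-1$, both affected coordinates lie in the product $y_1\cdots y_{n-1}$, and the transfer strictly decreases it (to first order because $y_k\ge c$, and through the $-\delta^2$ term if $y_k=c$); (A1) is preserved by the top room just noted and by $y_b>y_{b+1}$. The only delicate point is that (A3) must survive at all intermediate indices $k,k+1,\dots,b-1$ crossed by the transfer, and this is \emph{the main obstacle}. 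I settle it by a slackness-propagation claim: if (A3) is slack at $l$ and $y_{l+1}=y_{l+2}=c$, then (A3) is slack at $l+1$. Indeed, with $T_l:=y_{l+1}+\dots+y_n$ the step reduces to $c(T_l+1)\le T_l$, i.e. $c\le T_l/(T_l+1)$, which follows from $c=y_{l+1}\le y_2\le 1/(2\iota)\le 1/2$ together with $T_l\ge y_{l+1}+y_{l+2}=2c$.

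Propagating from the assumed slackness at $k$ then shows that all of $k,\dots,b-1$ are slack, so the transfer is feasible and contradicts minimality, forcing tightness at $k$ and completing the induction. The point worth emphasizing is that the bound $y_2\le 1/(2\iota)$ (equivalently $c\le 1/2$) is exactly what makes equal-coordinate blocks harmless; without it the transfer could cross a tight constraint. Note finally that this mechanism breaks down precisely at $k=i_0-1$, where the block below is the terminal block and its bottom coordinate $y_n$ lies \emph{outside} the product $y_1\cdots y_{n-1}$ (so the transfer would increase the product); this is the structural reason the statement stops at $i_0-2$.
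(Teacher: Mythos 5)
Your proof is correct and follows essentially the same exchange-argument strategy as the paper: part (i) is the identical mass transfer, and in part (ii) the paper likewise forces the constraints (A3) to be tight for all $k \le i_0-2$ by local perturbations contradicting minimality and then solves the resulting recursion $y_1\cdots y_k = \iota(y_{k+1}+\dots+y_n)$ to obtain $y_k = 1/s_{\iota,k}$. The only organizational difference is the handling of equal coordinates: the paper first proves $y_k > y_{k+1}$ for $k \le i_0-2$ (observing that $y_k = y_{k+1}$ forces (A3) at $k$ to be \emph{unconditionally} strictly slack, via the identity $0 = y_k(\iota - y_1\cdots y_{k-1}) + \iota(y_{k+2}+\dots+y_n)$, and then perturbing the two endpoints of a putative equal block), whereas you tolerate blocks and transfer across them, justifying feasibility with your conditional slackness-propagation claim using $c \le y_2 \le 1/(2\iota) \le 1/2$ --- a sound variant of the same idea.
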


\begin{proof}
We prove (i). Assume $y_{n-1} > y_n$. Choose $0 < \epsilon < (y_{n-1}-y_n)/2$. Then the tuple
\begin{equation*}
    (\tilde{y}_1,\dots,\tilde{y}_n) \ = \ (y_1,\dots,y_{n-2},y_{n-1}-\epsilon,y_n+\epsilon).
\end{equation*}
is contained in $A_\iota^n$. We have $\tilde{y}_1 \cdots \tilde{y}_{n-1} < y_1 \cdots y_{n-1}$, contradicting the minimality of $y$. Thus $y_{n-1} = y_n$ holds. We prove (ii). For this we first show that $y_k > y_{k+1}$ and $y_1\cdots y_k = \iota(y_{k+1} + \dots + y_{n})$ holds for any $1 \le k \le i_0 - 2$. Assume $y_k = y_{k+1}$. Then we can find $1 \le i \le k < j < i_0$ with $y_{i-1} > y_i = \dots = y_k = \dots = y_j > y_{j+1}$. Here $y_1\cdots y_k < \iota(y_{k+1} + \dots + y_n)$ holds, since otherwise we had $0 = y_k(\iota - y_1\cdots y_{k-1}) + \iota(y_{k+2} + \dots + y_n)$, where the right hand side is positive. We can thus find $\epsilon > 0$ such that the tuple
\begin{equation*}
    (\tilde{y}_1,\dots,\tilde{y}_n) \ = \ (y_1,\dots,y_{i-1},y_i+\epsilon,y_{i+1},\dots,y_{j-1},y_j-\epsilon,y_{j+1},\dots,y_n)
\end{equation*}
is contained in $A_\iota^n$. For the product of the first $n-1$ entries we have
\begin{equation*}
    \tilde{y}_1 \cdots \tilde{y}_{n-1} \ = \ y_1\cdots y_{n-1} \left( 1 - \frac{\epsilon^2}{y_i y_j} \right) \ < \ y_1\cdots y_{n-1},
\end{equation*}
contradicting the minimality of $y$. Hence $y_k > y_{k+1}$ holds for $k = 1,\dots,i_0-2$. Now assume that $y_1\cdots y_k < \iota(y_{k+1} + \dots + y_n)$ holds. Again, we find $\epsilon > 0$ such that the tuple
\begin{equation*}
    (\tilde{y}_1,\dots,\tilde{y}_n) \ = \ (y_1,\dots,y_{k-1},y_k+\epsilon,y_{k+1}-\epsilon,y_{k+2},\dots,y_n)
\end{equation*}
is contained in $A_\iota^n$, leading to the same contradiction as before. Hence $y_1\cdots y_k = \iota(y_{k+1} + \dots + y_n)$ holds for $k = 1,\dots,i_0-2$. Using these identities we can compute $y_k$. We have $y_1 = \iota(y_2 + \dots + y_n) = 1 - \iota y_1$. Solving this for $y_1$ we obtain $y_1 = 1/(\iota+1) = 1/s_{\iota,1}$. Proceeding in this way we obtain $y_k = 1/s_{\iota_k}$ for all $1 \le k \le i_0-2$.
\end{proof}

\begin{proof}[Proof of Lemma \ref{lemma:izku}]
Let $y \in A_\iota^n$ minimizing the product $y_1\cdots y_{n-1}$. By Lemma \ref{lemma:ufp_in_A} the tuple of reciprocals of the enlarged sylvester partition $\syl_{\iota,n}$ is contained in $A_\iota^n$. Hence
\begin{equation*}
    y_1\cdots y_{n-1} \ \le \ \frac{1}{s_{\iota,1}} \cdots \frac{1}{s_{\iota,n-2}} \cdot \frac{1}{2t_{\iota,n-1}} \ = \ \frac{\iota}{2\,t_{\iota,n-1}^2}
\end{equation*}
holds. Let $i_0 \in \{1,\dots,n\}$ the least index with $y_{i_0} = y_{n}$. By Lemma \ref{lemma:yn-1=yn} we have $i_0 \le n-1$. Set $r := n - i_0$. We distinguish three cases.

\medskip
\noindent\emph{Case 1.}
Assume $i_0 = 1$. Then $r = n-1$ and $y_k = 1/(\iota n)$ holds for all $k=1,\dots,n$. We obtain
\begin{equation*}
    \frac{\iota}{2\,t_{\iota,n-1}^2} \ \ge \ y_1 \cdots y_{n-1} \ = \ \frac{1}{(\iota n)^{n-1}} \ = \ \frac{1}{(r+1)^r t_{\iota,n-r}^r}.
\end{equation*}
Comparing this to Lemma \ref{lemma:t_n-ineq} for the case $r = n-1$, we see that this is only possible for $(\iota,n,r) = (2,3,2)$ and $(y_1, y_2, y_3) = (1/6,1/6,1/6)$ and in this case equality holds.

\medskip
\noindent\emph{Case 2.}
Assume $i_0 = 2$. Then $r = n-2$ and $y_1 > y_2 = \dots = y_n$ holds. By (A2) we obtain $y_1 = 1/\iota - (n-1)y_n$. Using this identity, together with (A3), we obtain an interval of possible values for $y_n$. On this interval we define a function $f$ by
\begin{equation*}
    f(y_n) \ := \ y_1 \cdots y_{n-1} \ = \ \left(\frac{1}{\iota} - (n-1)y_n\right) y_n^{n-2}, \qquad y_n \ \in \ \left[ \frac{1}{(r+1)t_{\iota,n-r}}, \frac{1}{\iota n} \right).
\end{equation*}
The function $f$ is monotone increasing, so it attains its minimum on the lower boundary of the interval. We obtain
\begin{equation*}
    \qquad \frac{\iota}{2\,t_{\iota,n-1}^2} \ \ge \ y_1 \cdots y_{n-1} \ = \ f(y_n) \ \ge \ \frac{\iota}{(r+1)^r t_{\iota,n-r}^{r+1}}
\end{equation*}
Comparing this to Lemma \ref{lemma:t_n-ineq} for the case $r = n-2$, this is only possible for $(\iota,n) = (1,4)$ and $(y_1,y_2,y_3,y_4) = (1/2,1/6,1/6,1/6)$, or $n = 3$ and $(1/y_1,1/y_2,1/y_3) = \syl_{\iota,3}$ and in these cases equality holds.

\medskip
\noindent\emph{Case 3.}
Assume $i_0 \ge 3$. Since $y_{n-1} = y_{n}$ holds, this case only appears for $n \ge 4$. We have $1 \le r \le n-3$. By Lemma \ref{lemma:yn-1=yn} we have $y_k = 1/s_{\iota,k}$ for all $1 \le k \le i_0-2$. Similar to the second case we use (A2) and (A3) to express $y_{i_0-1}$ in terms of $y_n$ and determine an interval of possible values for $y_n$:
\begin{equation*}
    \qquad\qquad y_{i_0-1} \ = \ \frac{1}{t_{\iota,n-r-1}} - (r+1)\,y_n, \qquad y_n \ \in \ \left[ \frac{1}{(r+1)t_{\iota,n-r}}, \frac{1}{(r+2) t_{\iota,n-r-1}} \right).
\end{equation*}
Again, we define the function $f(y_n) := y_1 \cdots y_{n-1}$ on that interval. It is monotone increasing up to some point and then it is monotone decreasing, so it attains its minimum at the boundary. We obtain:
\begin{equation*}
    \frac{\iota}{2\,t_{\iota,n-1}^2}
    \ \ge \
    y_1 \cdots y_{n-1}
    \ \ge \
    \min\left( \frac{\iota}{(r+1)^r t_{\iota,n-r}^r}, \frac{\iota}{(r+2)^{r+1} t_{\iota,n-r-1}^{r+1}} \right).
\end{equation*}
Comparing this to Lemma \ref{lemma:t_n-ineq} for $1 \le r \le n-3$, this is only possible for $r = 1$ and $y_n = 1/(2 t_{\iota,n-1})$. Hence $(1/y_1,\dots,1/y_n) = \syl_{\iota,n}$ and in this case equality holds.
\end{proof}

\begin{proof}[Proof of Proposition \ref{prop:uf_ineq}]
Let $A = (a_1,\dots,a_n)$ a uf-partition of $\iota$ with $a_1 \le \dots \le a_n$. The first inequality is due to the fact that $a_n$ divides $\lcm(a_1,\dots,a_n)$. By Lemma \ref{lemma:ufp_in_A} the tuple $x = (1/a_1,\dots,1/a_n)$ is contained in $A_\iota^n$. The second inequality and the assertions thereafter now follow immediately from Lemma \ref{lemma:izku}.
\end{proof}

\section{Proof of the main result}\label{section:proof}

We state and prove the main result of the article.

\begin{definition}
For any $d \ge 2$ and any $\iota \in \ZZ_{\ge 1}$ we denote by $Q_d^\iota$ the well-formed weight system
\begin{equation*}
    Q_{\iota,d} \ := \ Q(\syl_{\iota,d+1}) \ = \ \left( \frac{2 t_{\iota,d}}{s_{\iota,1}}, \dots , \frac{2 t_{\iota,d}}{s_{\iota,d-1}}, 1, 1\right),
\end{equation*}
where $t_{\iota,d}$ and $s_{\iota,k}$, $k = 1,\dots,d-1$ are as in Definition \ref{def:sylseq}.
\end{definition}

\begin{theorem}\label{thm:main2}
The anticanonical degree of any $d$-dimensional fake weighted projective space $X$ of Gorenstein index $\iota$ is bounded according to the following table.
\begin{longtable}{c|c|cc|cc|c}
    $d$
    &
    $1$
    &
    $2$
    &
    $2$
    &
    $3$
    &
    $3$
    &
    $\ge 4$
    \\[2pt]\hline
    &&&&&\\[-9pt]
    $\iota$
    &
    $\ge 1$
    &
    $1$
    &
    $\ge\!2$
    &
    $1$
    &
    $\ge\!2$
    &
    $\ge 1$
    \\[2pt]\hline
    &&&&&\\[-10pt]
    $\begin{array}{c}
         \text{bound on}\\
         (-\mathcal{K}_X)^d
    \end{array}$
    &
    $2$
    &
    $9$
    &
    $\frac{2(\iota+1)^2}{\iota}$
    &
    $72$
    &
    $\frac{2t_{\iota,3}^2}{\iota^4}$
    &
    $\frac{2t_{\iota,d}^2}{\iota^{d+1}}$
    \\[8pt]\hline
    &&&&&\\[-10pt]
    $\begin{array}{c}
         \text{attained}\\
         \text{exactly by}
    \end{array}$
    &
    $\PP^1$
    &
    $\PP^2$
    &
    $\PP(2\iota,1,1)$
    &
    $\begin{array}{l} \PP(3,1,1,1), \\ \PP(6,4,1,1) \end{array}$
    &
    $\PP(Q_{\iota,3})$
    &
    $\PP(Q_{\iota,d})$
\end{longtable}
\noindent Equality on the degree holds if and only if $X$ is isomorphic to one of the weighted projective spaces in the last row of the table.
\end{theorem}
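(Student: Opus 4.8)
The plan is to translate the geometric quantity $(-\mathcal{K}_X)^d$ entirely into the arithmetic of the uf-partition $A(P)$ and then invoke Proposition \ref{prop:uf_ineq}. Writing $X = X(P)$ with $\iota = \iota_P$ and $A(P) = (a_0,\dots,a_d)$, Lemma \ref{lemma:degvol} gives $(-\mathcal{K}_X)^d = \Vol(P^*)$, and since scaling a $d$-dimensional polytope by $\iota$ multiplies its normalized volume by $\iota^d$, we have $\Vol(\iota P^*) = \iota^d \Vol(P^*)$. Combining this with Proposition \ref{prop:uf-vol} yields the key identity
\begin{equation*}
    (-\mathcal{K}_X)^d \ = \ \frac{1}{\lambda_P\, \iota^d}\cdot\frac{a_0\cdots a_d}{\lcm(a_0,\dots,a_d)}.
\end{equation*}
By Proposition \ref{prop:P-uf} the tuple $A(P)$ is a uf-partition of $\iota$ of length $n = d+1$, so whenever $(\iota,d+1)\ne(1,3)$ Proposition \ref{prop:uf_ineq} bounds the rightmost fraction by $2t_{\iota,d}^2/\iota$. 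As $\lambda_P\ge 1$, this gives $(-\mathcal{K}_X)^d \le 2t_{\iota,d}^2/\iota^{d+1}$, which is exactly the entry in the last column; specialized to $d\in\{2,3\}$ it reproduces the remaining non-degenerate entries (using $t_{\iota,2}=\iota(\iota+1)$, so that $2t_{\iota,2}^2/\iota^3 = 2(\iota+1)^2/\iota$), and it covers the case $(d,\iota)=(3,1)$ with bound $72$.

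The excluded case $(\iota,d+1)=(1,3)$, i.e. $(d,\iota)=(2,1)$, must be treated by hand, precisely because the uniform bound $2t_{1,2}^2 = 8$ is false there. First I would list the three length-$3$ uf-partitions of $1$, namely $(3,3,3)$, $(2,4,4)$ and $(2,3,6)$, for which $a_0a_1a_2/\lcm(a_0,a_1,a_2)$ equals $9$, $8$ and $6$; the identity above then yields $(-\mathcal{K}_X)^2\le 9/\lambda_P\le 9$. The degenerate row $d=1$ is immediate, the only one-dimensional fake weighted projective space being $\PP^1$, of degree $2$.

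For attainment I would verify that $\PP(Q_{\iota,d})$ realizes the bound. By Proposition \ref{prop:ws-ufp} its associated uf-partition is $A(Q_{\iota,d}) = \syl_{\iota,d+1}$, a reduced uf-partition of $\iota$; hence the weight-system index, and therefore the Gorenstein index, of $\PP(Q_{\iota,d})$ equals $\iota$, and since $\lambda_P=1$ the identity above together with the equality clause of Proposition \ref{prop:uf_ineq} gives $(-\mathcal{K})^d = 2t_{\iota,d}^2/\iota^{d+1}$. The same computation applied to the exceptional reduced partition $(2,6,6,6)$ of $1$ produces $\PP(3,1,1,1)$, accounting for the second extremal space in the column $d=3,\iota=1$, while $\syl_{1,4}=(2,3,12,12)$ produces $\PP(6,4,1,1)$; both have anticanonical degree $72$.

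The delicate part, and the main obstacle, is the equality classification. The chain of inequalities shows that $(-\mathcal{K}_X)^d$ equals the bound only if $\lambda_P=1$ \emph{and} $A(P)$ attains equality in Proposition \ref{prop:uf_ineq}. The first condition means $X$ is a genuine weighted projective space $\PP(Q_P)$, and for such the Gorenstein index coincides with the weight-system index, equivalently $A(P)=A(Q_P)$ is reduced. This reducedness is exactly what discards the spurious equality case $A=(6,6,6)$ of Proposition \ref{prop:uf_ineq}, whose $\gcd(2,6,6,6)=2$: were $A(P)=(6,6,6)$, then $A(P)_{\red}=(3,3,3)$ would force $Q_P=Q((3,3,3))=(1,1,1)$, so $X\cong\PP^2$, a variety of Gorenstein index $1\ne 2$ --- a contradiction. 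The surviving equality cases are reduced, and reconstructing $Q_P = Q(A(P))$ via Proposition \ref{prop:ws-ufp} matches each to the weighted projective space in the last row: $\syl_{\iota,d+1}\mapsto\PP(Q_{\iota,d})$ in general, with the extra partition $(2,6,6,6)\mapsto\PP(3,1,1,1)$ when $(d,\iota)=(3,1)$, and the separate analysis of $(d,\iota)=(2,1)$ singling out $(3,3,3)\mapsto\PP^2$. Conversely these spaces attain the bound by the previous paragraph, which yields the ``if and only if''.
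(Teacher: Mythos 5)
Your proposal is correct and follows essentially the same route as the paper: reduce $(-\mathcal{K}_X)^d$ to the quantity $\frac{1}{\lambda_P\iota^d}\cdot\frac{a_0\cdots a_d}{\lcm(a_0,\dots,a_d)}$ via Lemma \ref{lemma:degvol} and Proposition \ref{prop:uf-vol}, apply Proposition \ref{prop:uf_ineq}, use $\lambda_P = 1$ (Corollary \ref{cor:gidegwpsfwps}) plus reducedness to discard the case $A=(6,6,6)$, and reconstruct the extremal weighted projective spaces via Proposition \ref{prop:ws-ufp} and Theorem \ref{thm:wpsfwps}. The only deviation is cosmetic: for the excluded cell $(d,\iota)=(2,1)$ you enumerate the three length-three uf-partitions of $1$ by hand, where the paper cites \cite[Ex.~4.7]{Ni}, and you spell out via the uniqueness in Theorem \ref{thm:wpsfwps} the contradiction ($X\cong\PP^2$ of index $1\ne 2$) that the paper compresses into the remark that $(6,6,6)$ is not reduced.
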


\begin{proof}
Let $X$ a $d$-dimensional fake weighted projective space of Gorenstein index $\iota$. Let $P \subseteq N_\RR$ a $d$-dimensional lattice simplex with $X(P) \cong X$. Then $P$ has index $\iota$. Let $A := A(P) = (a_0,\dots,a_d)$ the uf-partition of $\iota$ associated to $P$. We may assume $a_0 \le \dots \le a_d$. By Lemma \ref{lemma:degvol} and Proposition \ref{prop:uf-vol} we have
\begin{equation*}
    (-\mathcal{K}_X)^d \ = \ \Vol(P^*) \ = \ \frac{1}{\iota^d} \Vol(\iota P^*) \ \le \ \frac{1}{\iota^d} \frac{a_0\cdots a_d}{\lcm(a_0,\dots,a_d)}.
\end{equation*}
For $d = 1$ there is only one fake weighted projective space, namely $\PP^1$, which has anticanonical degree $-\mathcal{K} = 2$. Let $d \ge 2$. In case $\iota = 1$ and $d = 2$ the right hand side of the inequality is bounded from above by $9$ and $\PP^2$ is the only Gorenstein fake weighted projective plane whose degree attains that value, see~\cite[Ex.~4.7]{Ni}. If $(\iota,d) \ne (1,2)$, then Proposition \ref{prop:uf_ineq} provides the upper bound
\begin{equation*}
    (-\mathcal{K}_X)^d \ \le \ \frac{1}{\iota^d} \frac{a_0\cdots a_d}{\lcm(a_0,\dots,a_d)} \ \le \ \frac{2 t_{\iota,d}^2}{\iota^{d+1}}.
\end{equation*}
Equality in the first case holds if and only if $X$ is a weighted projective space, see Corollary \ref{cor:gidegwpsfwps}. By Proposition \ref{prop:uf_ineq} equality in the second case holds if and only if one of the following holds:
\begin{enumerate}
    \item $(\iota,d) = (2,2)$ and $A = (6,6,6)$.
    \item $(\iota,d) = (1,3)$ and $A = (2,6,6,6)$.
    \item $A = \syl_{\iota,d+1}$.
\end{enumerate}
  Note that the uf-partition in (i) is not reduced. In particular, there is no weighted projective plane $X(P)$ of Gorenstein index $2$ with $A(P) = (6,6,6)$. The uf-partitions in (ii) and (iii) are reduced and well-formed. By Theorem \ref{thm:wpsfwps} and Proposition \ref{prop:ws-ufp} the uf-partition $A = (2,6,6,6)$ corresponds to the three-dimensional Gorenstein weighted projective space $X = \PP(3,1,1,1)$ and the uf-partition $A = \syl_{\iota,d+1}$ corresponds to the $d$-dimensional weighted projective space $X = \PP(Q_{\iota,d})$.
\end{proof}

\begin{bibdiv}
\begin{biblist}

\bib{Ba94}{article}{
  title={Dual polyhedra and mirror symmetry for Calabi-Yau hypersurfaces in toric varieties},
  author={Batyrev, V. V.},
  journal={J. Algebraic Geom.},
  volume={3},
  pages={493--535},
  year={1994}
}

\bib{BoBo}{article}{
  title={Singular toric Fano varieties},
  author={Borisov, A. A.},
  author={Borisov, L. A.},
  journal={Mat. Sb.},
  volume={183},
  number={2},
  pages={134--141},
  year={1992}
}

\bib{Co}{article}{
   author={Conrads, H.},
   title={Weighted projective spaces and reflexive simplices},
   journal={Manuscripta Math.},
   volume={107},
   number={2},
   pages={215--227},
   year={2002},
}

\bib{CoLiSch}{book}{
   author={Cox, D. A.},
   author={Little, J. B.},
   author={Schenck, H. K.},
   title={Toric varieties},
   series={Graduate Studies in Mathematics},
   volume={124},
   publisher={American Mathematical Society, Providence, RI},
   date={2011},
   pages={xxiv+841},
}

\bib{Fu}{book}{
   author={Fulton, W.},
   title={Introduction to toric varieties},
   series={Annals of Mathematics Studies},
   volume={131},
   note={The William H. Roever Lectures in Geometry},
   publisher={Princeton University Press, Princeton, NJ},
   date={1993},
   pages={xii+157},
   isbn={0-691-00049-2},
}

\bib{IzKu}{article}{
   author={Izhboldin, O.},
   author={Kurliandchik, L},
   title={Unit fractions},
   journal={Translations of the American Mathematical Society-Series 2},
   volume={166},
   pages={193--200},
   year={1995},
   publisher={Providence [etc.] American Mathematical Society, 1949-}
}

\bib{Ka}{article}{
   author={Kasprzyk, A. M.},
   title={Bounds on fake weighted projective space},
   journal={Kodai Mathematical Journal},
   volume={32},
   number={2},
   pages={197--208},
   year={2009},
   publisher={Department of Mathematics, Tokyo Institute of Technology}
}

\bib{Ni}{article}{
   author={Nill, B.},
   title={Volume and lattice points of reflexive simplices},
   journal={Discrete \& Computational Geometry},
   volume={37},
   number={2},
   pages={301--320},
   year={2007},
   publisher={Springer}
}

\end{biblist}
\end{bibdiv}

\end{document}